\newtheorem{theorem}{Theorem}
\newtheorem{proposition}{Proposition}
\newtheorem{lemma}{Lemma}
\newtheorem{remark}{Remark}
\newcommand{\ord}{{\rm{ord}}}
\newcommand{\sing}{{\rm{Sing}}}
\begin{document}
\title[On $(2,3)$ torus decompositions]
{On $(2,3)$ torus decompositions of $QL$-configurations}
\author[M. Kawashima and K. Yoshizaki]
{Masayuki Kawashima and Kenta Yoshizaki}

\keywords{Torus curve, Alexander polynomial.}
\subjclass[2000]{14H20,14H30,14H45}

\address{\vtop{
\hbox{Department of Mathematics}
\hbox{Tokyo University of science}
\hbox{wakamiya-cho 26, shinjuku-ku}
\hbox{Tokyo 162-0827 Japan}
\hbox{\rm{E-mail}: {\rm kawashima@ma.kagu.tus.ac.jp}}
}}

\maketitle

\begin{abstract}
 Let $Q$ be an affine quartic which does not 
 intersect transversely  with the line at infinity $L_{\infty}$.
In this paper,
 we show the existence of a $(2,3)$ torus decomposition of 
 the defining polynomial of $Q$ and its uniqueness except for one class.
\end{abstract}

\section*{Introduction}\label{s0}

Let $C=\{f=0\}\subset \Bbb{C}^2$
be an irreducible affine plane curve and
let $a, b$ be coprime positive integers with $a,b\ge 2$.
We say that $C$ is
   {\em a  quasi torus curve of type $(a,b)$} (c.f \cite{Ku-Albanese})
    if there exist polynomials
   $f_r$, $f_p$ and $f_q$ such that
   they satisfy the following condition: 
 \[
  \tag{$*$} f_r(x,y)^{ab}f(x,y)=f_p(x,y)^a+f_q(x,y)^b,\quad
  \deg f_j = j,\ \ j=r,p,q
 \]
   where $r\ge 0$ and $p$, $q>0$.
   Under using affine equations,
   we also add conditions that 
 any two polynomials of $f$, $f_r$, $f_p$ and $f_q$ are coprime.
 We say that such a decomposition $(*)$ is
  {\em a quasi torus decomposition of $C$}. 
A quasi torus curve $C$ is called {\em torus curve}
  if $f_r(x,y)$ is a non-zero constant.

In \cite{Tokunaga-Kyoto},
H. Tokunaga studied $D_{2p}$ covers of $\Bbb{P}^2$
branched along a quintic $Q+L_{\infty}$
where $Q$ is a quartic and  $L_{\infty}$ is the line at infinity and
their relative positions are the following:
 \begin{itemize}
  \item   $Q\cap L_{\infty}$ consists of two points.
\begin{enumerate}
  \item[(i)]  $L_{\infty}$ is bi-tangent to $Q$ at two distinct smooth points. 
\item[(ii)] $L_{\infty}$ is tangent to a smooth point and passes through
              a singular point of $Q$. 
\item[(iii)] $L_{\infty}$ passes through two distinct singular points of
             $Q$.
   \end{enumerate}
  \item $Q\cap L_{\infty}$  consists of a point.
\begin{enumerate}
  \item[(iv)] $L_{\infty}$ is tangent to $Q$ at a smooth point with intersection multiplicity $4$. 
\item[(v)] $L_{\infty}$ intersects $Q$ at a singular point with intersection multiplicity $4$.
\end{enumerate}
       \end{itemize}
Table 1 is the list of the possible configurations
$Q+L_{\infty}$ which is given in \cite{Tokunaga-Kyoto}.
\begin{center}
\begin{tabular}{|c|c|c|c|c|c|c|c|}
\hline
No. & \quad $\sing(Q)$\quad   \quad & $Q\cap L_{\infty}$ & No. &\quad
 $\sing(Q)$\quad \  & $Q\cap L_{\infty}$ \\ \hline
(1) & $2a_2$ & (i) & (11) & $e_6$ & (i) \\
(2) & $2a_2$ & (iv) &  (12) & $e_6$ & (iv)    \\
(3) & $2a_2+a_1$ & (i) &   (13) & $a_4+a_2^{\infty}$ & (ii) \\
(4) & $2a_2+a_1$ &\ (iv) & (14) & $a_3^{\infty}+a_2+a_1$ & (ii) \\
(5) & $3a_2$ & (i) & (15) & $a_5+a_1$ & (i) \\
(6) & $a_2+a_3^{\infty}$ & (ii)
                  & (16) & $a_5^{\infty}+a_2^{\infty}$ & (iii) \\
(7) & $a_5$ & (i) & (17) & $2a_3^{\infty}$ & (iii) \\
(8) & $a_5$ & (iv) & (18) & $a_7^{\infty}$ & (v) \\
(9) & $a_6^{\infty}$ & (ii) & (19) & $2a_3^{\infty}+a_1$ & (iii) \\
(10) & $a_4^{\infty}+a_2$ & (v)  & & & \\ \hline
\end{tabular}\\[0.3cm]
Table 1. \\[1mm]
\end{center}
where the singularities of types $a_n$ and $e_6$ are defined by
\[
 a_n:x^2+y^{n+1}=0\  (n\ge 1),\quad e_6:x^3+y^4=0
\]
 and the notation  $*^\infty$ express singularities on the line
 $L_{\infty}$.
We call such configurations {\it $QL$-configurations}.
Note that $Q$ is irreducible  for the cases $(1),\dots, (13)$ and 
$Q$ is not irreducible  for the cases $(14),\dots, (19)$.
We call configurations for the cases $(1),\dots, (13)$
(respectively for the cases $(14),\dots, (19)$)
{\it irreducible $QL$-configurations}
(resp. {\it non-irreducible $QL$-configurations}). 

In \cite{Y}, the second author studied two topological invariants of
\linebreak
 $QL$-configurations,
 the fundamental group $\pi_1(\Bbb{C}^2\setminus Q)$ and
 the Alexander polynomial $\Delta_Q(t)$.
 In particular, he showed that
\[
 \tag{$\star$}
 {\text{$t^2-t+1$ divides $\Delta_Q(t)$ except
 for the case $(13)$ and $(16)$. }}
\]

In \cite{okatan},
 M. Oka studied a special type of degeneration family
 $\{C_\tau\}$ of irreducible torus sextics
 which degenerates into $C_0:=D+2L_{\infty}$
 where $D$ is a quartic and $L_{\infty}$ is a line.
 We call such a degeneration {\em{a line degeneration of order $2$}}.
 (We will give the definition for general situation in \S1).
 He showed the divisibility of the Alexander polynomials
 $\Delta_{C_{\tau}}(t)\,|\, \Delta_{D}(t)$ for $\tau \ne 0$.
  (Theorem 14 of  \cite{okatan})

 In this paper, we study the possibilities of quasi torus
decompositions of $QL$-configurations so that
the above divisibility $(\star)$
also follows from  the results of the line degeneration by M. Oka.

 This paper consists of 9 sections.
In section 1,
we recall the definition of
line degenerated torus curves of type $(p,q)$.
In section 2,
 we classify the singularities of
 line degenerated torus curves of type $(2,3)$.
In section 3, we state our main theorem.
In sections 4, 5, 6 and 7, we prove the theorems which are
stated in \S 3
using  line degenerated torus curves of type $(2,3)$.

 For the cases $(14),\dots, (19)$,
  $Q$ is not irreducible but
 we can also consider torus decomposition of non-irreducible
 $QL$-configurations
 without irreducibility and the condition $\gcd(a,b)=1$
 in the definition of quasi torus curves. 
 In section 8,
we consider torus decomposition of above non-irreducible
 $QL$-configurations. 
In section 9, we will show that
if a plane curve  has a $(2,3)$ torus decomposition,
then there exist infinite $(2,3)$ quasi torus decompositions.

\section{Line degenerated torus curves}\label{s1}
Let $U$ be an open neighborhood of $0$ in $\Bbb{C}$
and let $\{C_s\,|\,s\in U\}$ be an analytic family of
  irreducible curves of degree $d$
which 
  degenerates into $C_0:=D+j\,L_{\infty}$ $(1\le j < d)$
  where $D$ is an irreducible curve of degree $d-j$ and
  $L_{\infty}$ is a line.
We assume that
  there is a point $B\in L_{\infty}\setminus L_{\infty}\cap D$ such
  that $B\in C_s$ and the multiplicity of $C_s$ at $P$ is $j$
  for any non-zero $s \in U$.
We call such a degeneration {\em a line degeneration of order $j$} and
  we call $L_{\infty}$ {\em the limit line} of the degeneration.
 $B$ is called {\em the base point} of the degeneration.
In \cite{okatan}, M. Oka showed that
there exists a canonical surjection:
\[
 \varphi:\pi_1(\Bbb{C}^2\setminus D)\to
 \pi_1(\Bbb{C}^2\setminus C_s),\quad
 s: {\text{sufficiently small,}} 
\]
where $\Bbb{C}^2=\Bbb{P}^2\setminus L_{\infty}$
 and as a corollary he showed  the divisibility among the Alexander polynomials of
a line degeneration family:
\[
 \Delta_{C_{s}}(t)\mid \Delta_{D_0}(t). 
\]

\subsection{Line degenerated torus curves} 
Let $C_{p,q}=\{F_{p,q}=0\}$ be
a $(p,q)$ torus curve $(p > q\ge 2)$  where $F_{p,q}$ is defined by 
\[\tag{$1$}
  F_{p,q}(X,Y,Z)=F_p(X,Y,Z)^q+F_q(X,Y,Z)^p,\quad
  \deg F_k=k,\  k=p,q
\]
  where $(X,Y,Z)$ is a homogeneous coordinates system of $\Bbb{P}^2$.
Suppose that $F_{p,q}$ is written by the following form:
\[\tag{$2$}
  F_{p,q}(X,Y,Z)=Z^j\,G(X,Y,Z)
\]
  where $G(X,Y,Z)$ is a homogeneous polynomial of degree $pq-j$.
We call a curve $D=\{G=0\}$
  {\em a line degenerated torus curve of type $(p,q)$ of order $j$} and
  the line $L_{\infty}=\{Z=0\}$
  {\em the limit line of the degeneration}.  
We divide the situations $(2)$ into two cases.

\noindent
{\bf{First case.}}
Suppose that the defining polynomials of associated curves
are written as follows: 
\[
 F_p(X,Y,Z)=
 F_{p-r}'(X,Y,Z)Z^r,\quad
 F_q(X,Y,Z)=F_{q-s}'(X,Y,Z)Z^s
\]
where $r$ and $s$ are positive integers such that $r < p$ and $s < q$.
We assume that $sp\ge rq$.
Factoring $F_{p,q}$ as
$F_{p,q}(X,Y,Z)=Z^{rq}G(X,Y,Z)$,
 we can see that $G$ is defined as
\[
\tag{$3$}
 G(X,Y,Z)=F_{p-r}'(X,Y,Z)^q+F_{q-s}'(X,Y,Z)^pZ^{sp-rq}.
\]
We call such a factorization {\em visible factorization} and
$D$ is called {\em a visible degeneration of torus curve of type $(p,q)$}.
By the definition,
$D\cap L_{\infty}=\{F_{p-r}'(X,Y,Z)=Z=0\}$ and thus
the limit line $L_{\infty}$ is singular with respect to
the visible degeneration of torus curve $D$.
In \cite{okatan}, M. Oka showed that
a visible 
degeneration of torus curve of type $(p,q)$
can be expressed as a line degeneration of irreducible
torus curves of degree $pq$.

\noindent
{\bf{Second  case.}}
 Neither $F_p$ or $F_q$ factors through $Z$
 but $F$ can be written as $(2)$.
Then $D$ is called  
 {\em an invisible 
 degeneration of torus curve of type $(p,q)$}.

\section{Line degenerated $(2,3)$\label{s2}
         torus curves of degree $4$}
In this section, we consider a $(2,3)$ sextic of torus type
which is a visible factorization.

\subsection{Visible factorization}
 Let $D=\{G=0\}$ be a quartic
 associated with  a visible factorization $(3)$:
\[
 D:\quad   G(X,Y,Z)=F_{2}'(X,Y,Z)^2+F_{1}'(X,Y,Z)^3Z=0.
\]
 We consider the associated curves $C_2:=\{F_2'=0\}$,
      $L:=\{F_1'=0\}$ and $L_{\infty}:=\{Z=0\}$.
 Let $P$ be an inner  singularity of $D$,
 namely $P$ is on the intersection
 $C_2\cap L$,  $C_2\cap L_{\infty}$ or $C_2\cap L\cap  L_{\infty}$.
 Then the topological type $(D,P)$  depends only on
 the intersection multiplicities  of $C_2$, $L$ and $L_{\infty}$.
To describe singularities of $D$, we put
the intersection multiplicities
     $\iota_1:=I(C_2,L;P)$ and
     $\iota_2:=I(C_2,L_{\infty};P)$.
Note that $0\le \iota_i \le 2$ for $i=1,2$ and
$(\iota_1,\iota_2)\ne (2,2)$ as $L\ne L_{\infty}$.
     
\begin{lemma}\label{l1}
 Suppose that $C_2$ is smooth at $P$.
 Then we have the following descriptions:
  \begin{enumerate}
  \item[{\rm{(1)}}]  If $P\in C_2\cap L\setminus L_{\infty}$, 
         then we have $(D,P)\sim a_{3\iota_1-1}$.
         
  \item[{\rm{(2)}}]  If $P\in C_2\cap L_{\infty}\setminus L$, 
         then $D$ is smooth at $P$ and is tangent to $L_{\infty}$
         with $I(D,L_{\infty};P)=2\iota_2$.
         
  \item[{\rm{(3)}}]  If $P\in C_2\cap L\cap  L_{\infty}$, then we have
         $(D,P)\sim a_{3\iota_1+\iota_2-1}$.
\end{enumerate}
\end{lemma}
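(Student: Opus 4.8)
The plan is to reduce everything to a local analytic computation at $P$ and to read off the singularity type by completing a square against the smooth branch $C_2$. I would choose an affine chart containing $P$: when $P\notin L_\infty$ (part (1)) dehomogenize by $Z$, and when $P\in L_\infty$ (parts (2), (3)) dehomogenize by $X$ or $Y$. In either chart $G$ becomes $g=f_2^2+f_1^3\,\ell$, where $f_2,f_1$ are the dehomogenizations of $F_2',F_1'$ and $\ell$ is that of $Z$; thus $\ell\equiv 1$ in part (1) and $\ell=z$, a coordinate vanishing on $L_\infty$, in parts (2), (3). Since $C_2$ is smooth at $P$, I may take $f_2$ as one of the two local coordinates and write the complementary coordinate as a local parameter $q$ on $C_2$; then the restrictions $f_1|_{C_2}$ and $\ell|_{C_2}$ vanish at $P$ to orders exactly $\iota_1$ and $\iota_2$, by definition of the intersection multiplicities. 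Consequently $B:=f_1^3\ell$ satisfies $\ord_q B|_{C_2}=3\iota_1+\iota_2$.

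For parts (1) and (3) we have $P\in L$, so $f_1(P)=0$; a direct differentiation then gives $\nabla g(P)=0$, so $D$ is singular at $P$. To identify the type I would write $B=h(q)+f_2\cdot r$, where $h(q):=B|_{C_2}$ has order $m:=3\iota_1+\iota_2$ (with $\iota_2=0$ in part (1), where $L_\infty$ plays no role). Because $B\in\mathfrak{m}^3$, the function $r$ satisfies $r(P)=0$ and has vanishing derivative in the $f_2$-direction at $P$, so $f_2+\tfrac12 r$ is a legitimate local coordinate, and completing the square yields $g=(f_2+\tfrac12 r)^2+\bigl(h(q)-\tfrac14 r^2\bigr)$. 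It then suffices to show that $r^2$ has order strictly larger than $m$, for then $h-\tfrac14 r^2=q^{m}\cdot(\text{unit})$ and the splitting lemma for a corank-one singularity yields $g\sim x^2+y^{m}=a_{m-1}$, that is $(D,P)\sim a_{3\iota_1+\iota_2-1}$, as claimed.

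For part (2) we have $P\notin L$, so $f_1(P)\ne 0$ and $f_1^3$ is a local unit, while $\ell=z$ vanishes to order $1$. Here the linear term in $z$ dominates: differentiating gives $\nabla g(P)=f_1(P)^3\,\nabla z(P)\ne 0$, so $D$ is smooth at $P$ and its tangent line is $\{z=0\}=L_\infty$, whence $D$ is tangent to $L_\infty$. Restricting $g$ to $L_\infty$ kills the term $f_1^3 z$ and leaves $g|_{L_\infty}=\bigl(f_2|_{L_\infty}\bigr)^2$, so $I(D,L_\infty;P)=2\,\ord\bigl(f_2|_{L_\infty}\bigr)=2\,I(C_2,L_\infty;P)=2\iota_2$. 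This is the expected contrast with the other two parts: the square $f_2^2$ produces a genuine singularity only when the perturbation $B$ vanishes to order $\ge 3$, which fails precisely when $P\notin L$.

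The main obstacle I expect is the order estimate on $r$ in the singular cases. Expanding $f_1=a_1 f_2+b_1 q^{\iota_1}+\cdots$ and $\ell=a_2 f_2+b_2 q^{\iota_2}+\cdots$ with $b_1,b_2\ne 0$, the lowest pure-$q$ contributions to $f_2\cdot r=B-h$ have the shapes $f_2\,q^{2\iota_1+\iota_2}$ and $f_2\,q^{3\iota_1}$, so $\ord r\ge\min(2\iota_1+\iota_2,\,3\iota_1)$ and hence $\ord r^2\ge\min(4\iota_1+2\iota_2,\,6\iota_1)$. Using $\iota_1\ge 1$ and $\iota_2\le 2$, one checks $4\iota_1+2\iota_2>3\iota_1+\iota_2$ and $6\iota_1>3\iota_1+\iota_2$, so indeed $\ord r^2>m$ and the leading term of $h$ survives. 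Carrying out this bookkeeping carefully, together with verifying that $f_2+\tfrac12 r$ is a genuine coordinate, is where the real work lies; the remaining passage to the normal form $x^2+y^{m}$ is standard.
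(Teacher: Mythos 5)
Your argument is correct and follows essentially the same route as the paper: choose local coordinates adapted to the smooth conic $C_2$, identify the leading part of $g=f_2^2+f_1^3\,\ell$ (namely $u^{2\iota_2}$ plus a linear term in case (2), and $v^2+u^{3\iota_1+\iota_2}$ in the singular cases), and verify that the residual terms do not change the local type. The only cosmetic differences are that the paper disposes of the residual terms by a Newton-boundary/weighted-order argument where you complete the square and invoke the corank-one splitting lemma with the order estimate on $r^2$, and that the paper cites Pho and Audoubert--Nguyen--Oka for part (1) while you prove it directly by the same computation as part (3).
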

\begin{proof}
The assertion $(1)$ is showed in \cite{Pho} and
 \cite{BenoitTu} 
 for general cases. 
We consider the case (2) and (3).
 We use affine coordinates $(x,z)=(X/Y,Z/Y)$
 on $\Bbb{C}^2=\Bbb{P}^2\setminus \{Y=0\}$.
 Then the defining polynomial $g$ of $D$ in the affine coordinates
 is given as follows.
 \[
 g(x,z):=G(x,1,z)=f_2'(x,z)^2+f_1'(x,z)^3z,\quad
 f_j'(x,z):=F_j'(x,1,z),\,j=1,2.
\]
As $C_2$ is smooth at $P$,
 we can take  local coordinates $(u,v)$ so that
 $L_{\infty}=\{v=0\}$ and
 $f'_2(u,v)=c_1\,v-\varphi(u)$ where $c_1\ne 0$.
 Then 
 $\ord_u \varphi(u)=\iota_2$ and 
 \[
 \begin{split}
   g(u,v)&=(c_1 v-\varphi(u))^2+f'_1(u,v)^3v\\
         &=c\,u^{2\iota_2}+f'_1(P)^3\,v+{\text{(higher terms)}},
 \ c\ne 0.
\end{split}
 \]
Here ``higher terms'' are linear combinations of monomials
 $u^{\alpha}v^{\beta}$
 such that $2\iota_2\beta+\alpha > 2\iota_2$.
 They do not affect the topology of $D$ at $P$. 
 As $P$ is not on  $L$, $f'_1(P)$ is not $0$.
 Hence we have the assertion (2).

 \newpage
 Now we show the assertion (3).
 As $C_2$ is smooth at $P$,
 we can take 
 local coordinates $(u,v)$ so that
 $f'_2(u,v)=v$, $f'_1(v,v)=c_1\,v-\varphi_1(u)$ and
 $L_{\infty}=\{c_2\,v-\varphi_2(u)=0\}$
 where $c_1$ and $c_2$ are non-zero constant.
 Then  
 $\ord_u \varphi_1(u)=\iota_1$ and 
 $\ord_u \varphi_2(u)=\iota_2$ and
 \[\begin{split}
   g(u,v)&=v^2+(c_1\,v-\varphi_1(u))^3(c_2\, v-\varphi_2(u))\\
   &=v^2-c\,u^{3\iota_1+\iota_2}+{\text{(higher terms)}},\ 
   c\ne 0.
\end{split}
 \]
Here ``higher terms'' are linear combinations of monomials
 $u^{\alpha}v^{\beta}$
 such that $2\alpha+(3\iota_1+\iota_2)\beta > 2(3\iota_1+\iota_2)$
 if $\gcd(2,3\iota_1+\iota_2)=1$ and
 $\alpha+(3\iota_1+\iota_2)\beta/2 > (3\iota_1+\iota_2)$
 if $\gcd(2,3\iota_1+\iota_2)=2$.
In particular, $v\varphi_1(u)^3$ is in  {\rm{(higher terms)}}.
 This shows the assertion (3).
\end{proof}

Next we consider the case that $C_2$ is singular at $P$.
Then $C_2$ consists of two lines $\ell_1$ and $\ell_2$ such that
$\ell_1\cap \ell_2=\{P\}$.

\begin{lemma}\label{l2}
  Suppose that $C_2$ is singular at $P$.
  Then singularities of $D$ at $P$ are described as follows:
    \begin{enumerate}
      \item[{\rm{(1)}}]  If $P\in C_2 \cap L\setminus L_{\infty}$,
             then we have $(D,P)\sim e_6$.
      \item[{\rm{(2)}}]  If $P\in C_2 \cap L_{\infty} \setminus L$,
             then $D$ is smooth at $P$ and is tangent to $L_{\infty}$
             with $I(D,L_{\infty};P)=4$.
      \item[{\rm{(3)}}]  If $P\in C_2 \cap L \cap L_{\infty}$,
             then $D$ consists of four lines
             which intersect at $P$.      
    \end{enumerate}
\end{lemma}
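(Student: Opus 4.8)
The plan is to mirror the local analysis of Lemma \ref{l1}, now exploiting that the singular conic $C_2$ splits as $C_2=\ell_1\cup\ell_2$ with $\ell_1\cap\ell_2=\{P\}$, i.e. $F_2'$ is (globally) a product of two distinct linear forms. The crucial structural input is that $\ell_1$ and $\ell_2$ coincide with neither $L$ nor $L_\infty$; this holds because $C_2$, $L$ and $L_\infty$ share no common component, i.e. $F_2'$, $F_1'$ and $Z$ are pairwise coprime. As in Lemma \ref{l1} I would pass to the affine chart $(x,z)=(X/Y,Z/Y)$, where $g(x,z)=f_2'(x,z)^2+f_1'(x,z)^3z$ and $f_2'=\ell_1\ell_2$ is an exact product of two linear forms vanishing at $P$, and then choose local coordinates $(u,v)$ at $P$ adapted to whichever of $L$, $L_\infty$ passes through $P$.

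For assertion (1), since $P\notin L_\infty$ the factor $z$ is a unit $z_0+(\text{h.o.t.})$ with $z_0\neq0$ near $P$; choosing $(u,v)$ with $L=\{v=0\}$ (so $f_1'=v$) and writing $\ell_i=a_iu+b_iv$, the hypothesis $\ell_i\neq L$ forces $a_i\neq0$, whence $(\ell_1\ell_2)^2$ contributes the monomial $(a_1a_2)^2u^4$ and $f_1'^3z$ contributes $z_0v^3$. I would then check that the Newton boundary of $g$ is the single segment joining $(4,0)$ and $(0,3)$, that every remaining monomial lies strictly above it, and that the face polynomial $(a_1a_2)^2u^4+z_0v^3$ is non-degenerate; Newton non-degeneracy then gives $g\sim u^4+v^3$, i.e. $(D,P)\sim e_6$. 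For assertion (2), since $P\notin L$ the factor $f_1'$ is a unit $c_0+(\text{h.o.t.})$ with $c_0\neq0$; taking $L_\infty=\{v=0\}$ (so $z=v$) the linear part of $g=(\ell_1\ell_2)^2+f_1'^3v$ equals $c_0^3v$, so $D$ is smooth at $P$ with tangent line $L_\infty$. Restricting to $L_\infty$ gives $g(u,0)=(\ell_1\ell_2)^2|_{v=0}=(a_1a_2)^2u^4$, again using that $\ell_i\neq L_\infty$ forces $a_i\neq0$, so $I(D,L_\infty;P)=\ord_u g(u,0)=4$.

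Assertion (3) is the cleanest: when all of $\ell_1,\ell_2,L,L_\infty$ pass through $P$, every factor in $g=(\ell_1\ell_2)^2+f_1'^3z$ is a linear form vanishing at $P$, so in local coordinates $g$ is homogeneous of degree $4$. A nonzero binary quartic form splits into four linear factors over $\Bbb{C}$, so $D$ is locally four lines through $P$; it remains only to rule out $g\equiv0$, which is immediate since $(\ell_1\ell_2)^2$ is a perfect square while $-f_1'^3z$ carries the triple factor $f_1'$, so the two homogeneous quartics cannot cancel.

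The step I expect to be the genuine obstacle is the non-degeneracy bookkeeping in (1): one must verify that $(\ell_1\ell_2)^2$ and the perturbation $f_1'^3z$ contribute no monomials on or below the segment $(4,0)$--$(0,3)$ other than the two vertex terms, and that no coincidence among the four lines beyond what coprimality already excludes can collapse the face polynomial. Once the coprimality-forced non-vanishing of $a_1a_2$ and $z_0$ is in hand this is routine, but it is the place where the argument could silently degenerate, so I would treat it with the same care as the ``higher terms'' analysis in Lemma \ref{l1}.
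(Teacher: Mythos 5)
Your proof is correct and follows essentially the same local-coordinate computation as the paper: your argument for assertion (2) is the paper's almost verbatim, and for (1) and (3) --- which the paper respectively delegates to \cite{Pho} and dismisses as obvious from the defining polynomial --- your Newton-boundary analysis (face polynomial $(a_1a_2)^2u^4+z_0v^3$, all other monomials strictly above the segment from $(4,0)$ to $(0,3)$) and the homogeneity argument correctly supply the omitted details. The coprimality observations you use to force $a_1a_2\neq 0$ (equivalently $\ell_i\neq L$ and $\ell_i\neq L_{\infty}$) are exactly what is needed, since otherwise $G$ would be non-reduced or divisible by $Z$.
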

\begin{proof}
The assertion $(1)$ is showed in \cite{Pho}.
 We show the assertion (2).
 We take a suitable local coordinates $(u,v)$ at $P$ so that
 $f_2(u,v)=u(b_1u-b_2v)$ and $L_{\infty}=\{v=0\}$
 where $b_i\ne 0$ for $i=1,2$.
 Then
 \[\begin{split}
  g(u,v)&=u^2(b_1u-b_2v)^2+b_3\,f'_1(P)^3v\\
        &=b_1^2u^4+b_3\,f'_1(P)^3v+{\text{(higher terms)}}.
    \end{split}
 \]
 As $P$ in not on  $L$, we have $f'_1(P)\ne 0$ and
 $I(D,L_{\infty};P)=4$.
 This shows the assertion (2).
The assertion $(3)$ is obvious form the defining polynomial of $D$.
\end{proof}

We say that $P$ is {\em an outer singularity of $D$}
if $P\in \sing(D)\setminus C_2$.
We consider possible outer singularities of $D$.

\begin{lemma}\label{l3}
  If $P\in D$ is an outer singularity,
            then $(D,P)$ is either $a_1$ or $a_{2}$.
 \end{lemma}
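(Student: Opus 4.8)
The plan is to analyze an outer singularity $P \in \sing(D) \setminus C_2$ directly from the defining equation $g = {f_2'}^2 + {f_1'}^3 z$, exploiting that $f_2'(P) \neq 0$. Since $P$ lies off $C_2$, the square term ${f_2'}^2$ is a unit (nonvanishing) at $P$, so it contributes a nonzero constant to the local expansion. I would first set up local coordinates at $P$ and write $g = {f_2'}^2 + {f_1'}^3 z$ locally, noting that $f_2'(P) \neq 0$ forces the constant term of $g$ to be nonzero unless the two summands cancel at $P$; but membership $P \in D$ means $g(P) = 0$, so ${f_2'}^2(P) = -{f_1'}^3(P)\,z(P)$, a single scalar relation. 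The singularity condition then comes from requiring the gradient of $g$ to vanish at $P$.

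The key observation I would push is that because ${f_2'}^2$ is locally a square of a unit (after possibly a local square-root, valid since $f_2'$ is a unit near $P$), I can write $g = {f_2'}^2\bigl(1 + {f_1'}^3 z / {f_2'}^2\bigr)$, and up to the unit factor ${f_2'}^2$ the local topological type of $D$ at $P$ is governed by the function $h := {f_2'}^2 + {f_1'}^3 z$ whose derivatives I can compute symbolically. An outer singularity arises only where both $g=0$ and $dg=0$. I would compute $dg = 2 f_2'\,df_2' + z\cdot 3{f_1'}^2\,df_1' + {f_1'}^3\,dz$, and examine its vanishing at $P$ together with the second-order terms to bound the Milnor number / delta-invariant. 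The strategy is to show the quadratic part of $g$ at $P$ is generically nondegenerate (giving an $a_1$ node) and that the worst possible degeneration consistent with $f_2'(P)\neq 0$ yields only an $a_2$ cusp — i.e. the Hessian can drop rank by at most one before the constraints force a contradiction.

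Concretely, I would choose coordinates centered at $P$ and Taylor-expand. Because $f_2'$ is a unit, its square ${f_2'}^2$ has an invertible leading behaviour; the term ${f_1'}^3 z$ is a product of a cube of a linear (or unit) form with $z$. The singularity type is then read off from the lowest-degree part of $g - g(P)$. I expect the analysis to split according to how the zero loci of $f_1'$ (the line $L$) and $z$ (the line $L_\infty$) sit relative to $P$ and to each other, and in each subcase the local normal form will be either $u^2 + v^2$ (type $a_1$) or $u^2 + v^3$ (type $a_2$). I would argue that since ${f_2'}^2$ contributes an honest nondegenerate quadratic direction (as $f_2'$ is a unit, $d({f_2'}^2) = 2f_2'\,df_2'$ is nonzero whenever $df_2' \neq 0$), at most one quadratic direction can degenerate, so the Hessian rank is at least one, ruling out anything worse than $a_2$.

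The hard part will be verifying rigorously that no $a_n$ with $n \geq 3$ can occur, i.e. controlling the second- and third-order terms to confirm the cube ${f_1'}^3 z$ cannot conspire with ${f_2'}^2$ to produce a tangency of order $\geq 4$ in the degenerate direction. The essential point, which I would need to make precise, is that the multiplicity three of $f_1'$ in the equation is "used up" against $C_2$ for inner singularities, whereas away from $C_2$ the unit ${f_2'}^2$ dominates and prevents higher cusps. I anticipate handling this by a short case check on whether $dg$ vanishing forces a relation that is incompatible with $f_2'(P)\neq 0$ unless the local type is exactly $a_1$ or $a_2$, and concluding the lemma.
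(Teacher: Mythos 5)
Your setup is the right one---and it is essentially all the paper itself records, since the paper gives no written proof of Lemma \ref{l3} beyond deferring to a computation in the style of Oka--Pho: at an outer singularity $P$ one has $f_2'(P)\ne 0$, hence also $f_1'(P)\ne 0$ and $z(P)\ne 0$ from $g(P)=0$, and one expands $g={f_2'}^2+{f_1'}^3z$ at $P$. The decisive step of your argument, however, is a genuine logical error: knowing that the Hessian of $g$ at $P$ has rank at least one does \emph{not} rule out anything worse than $a_2$. By the splitting lemma a corank-one critical point is equivalent to $u^2+v^k$ for some $k\ge 2$, i.e.\ it is $a_{k-1}$ for some $k$ that can be arbitrarily large; in particular $a_3\colon u^2+v^4=0$ has Hessian of rank exactly one. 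So the dichotomy $a_1$ versus $a_2$ is not decided by the rank of the quadratic part: one must show that whenever the quadratic part degenerates to rank one, the restriction of $g$ to the kernel direction still has a nonvanishing \emph{cubic} term. That is precisely the ``hard part'' you flag at the end, and the only mechanism you actually propose (Hessian rank) cannot supply it.

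What is needed---and what the computation cited from Oka--Pho carries out---is to combine the critical-point relations $2qq_x=-3\ell^2\ell_x z$, $2qq_z=-3\ell^2\ell_z z-\ell^3$ and $q^2=-\ell^3 z$ with the degree constraints ($q$ a conic, $\ell$ a line) to control the second- and third-order jets of $g$ at $P$ and exclude a vanishing cubic term along the degenerate direction. Two further soft spots: the claim that ``at most one quadratic direction can degenerate'' is only heuristic, since the Hessian of $g$ is the sum of those of ${f_2'}^2$ and ${f_1'}^3z$ and nothing you say forbids cancellation a priori; and the factorization $g={f_2'}^2\left(1+{f_1'}^3z/{f_2'}^2\right)$ is circular, the second factor being $g/{f_2'}^2$ and hence defining the same germ. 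A cleaner starting point is that near $P$ one may write $g=(f_2'-h)(f_2'+h)$ with $h=\sqrt{-{f_1'}^3z}$ analytic and $f_2'+h$ a unit, so $(D,P)=\{f_2'-h=0\}$; but the jets of $f_2'-h$ must then still be computed to conclude.
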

Our proof is computational and it is done in the same way as in
 \cite{Oka-Pho2}.

\subsection{Invisible factorization}
Let $D=\{G=0\}$ be an invisible factorization of a $(2,3)$ torus curve
which satisfies the following equations.
 \[
 \tag{$2$}
  F_{2,3}(X,Y,Z)=
  F_2(X,Y,Z)^3-F_3(X,Y,Z)^2=Z^2\,G(X,Y,Z)
\]
We assume that $G$ is not divided by $Z$ and $G$ is reduced.
We rewrite $F_2$ and $F_3$ as follows:
\[
\begin{split}
 F_2(X,Y,Z)&=F_2^{(2)}(X,Y)+F_2^{(1)}(X,Y)Z+F_2^{(0)}(X,Y)Z^2,\\
 F_3(X,Y,Z)&=F_3^{(3)}(X,Y)+F_3^{(2)}(X,Y)Z+F_3^{(1)}(X,Y)Z^2
 +F_3^{(0)}(X,Y)Z^3
\end{split}
\]
where $F_j^{(i)}$ is a homogeneous polynomial of degree $i$.
 By an easy calculation, we observe that there exists a linear form 
 $\ell_1(X,Y)$ so that 
 \[
   \begin{cases}
   F_2^{(2)}(X,Y)=\ell_1(X,Y)^2,\\
   F_3^{(3)}(X,Y)=\varepsilon\,\ell_1(X,Y)^3,\quad
   F_3^{(2)}(X,Y)=\dfrac{3\varepsilon}{2}\ell_1(X,Y)F_2^{(1)}(X,Y)
  \end{cases}
\]
 where $\varepsilon=1$  or $-1$.
  We put $\ell_2(X,Y):=F_2^{(1)}(X,Y)$ and
 $\ell_3(X,Y):=F_3^{(1)}(X,Y)$.
 Then we may assume  the defining polynomials of $C_2$ and $C_3$
as the following:
\begin{eqnarray*}(\sharp)\quad
 \begin{cases}
\begin{split}
 F_2(X,Y,Z)&=\ell_1(X,Y)^2+\ell_2(X,Y)\,Z+a_{00}\,Z^2,\\ 
 F_3(X,Y,Z)&=\ell_1(X,Y)^3+\frac{3}{2}\,\ell_1(X,Y)\,\ell_2(X,Y)\,Z
                 +\ell_3(X,Y)\,Z^2+b_{00}\,Z^3.
 \end{split}
\end{cases}
\end{eqnarray*}
Then $F_{2,3}$ is factorized  as
\[
\begin{split}
 F_{2,3}(X,Y,Z)=F_2(X,Y,Z)^3-F_3(X,Y,Z)^2=Z^2G(X,Y,Z). 
\end{split}
\]
To see the local geometry of $D$ at a intersection point 
$D$ and $L_{\infty}$,
 we may assume  $\ell_1(X,Y)=X$ and
we take the affine coordinates
 $(x,z)=(X/Y,Z/Y)$ at $O^*:=[0:1:0]$.
 Let $g(x,z)=G(x,1,z)$, $f_2(x,z)=F_2(x,1,z)$ and
  $f_3(x,z)=F_3(x,1,z)$ 
 be the local equations of $D$, $C_2$ and $C_3$ respectively.
 In the affine  coordinates $(x,z)$,
 $f_2$ and $f_3$ are written as
\[
\begin{split}
 f_2(x,z)&=x^2+\ell_2(x,1)\,z+a_{00}z^2,\\
 f_3(x,z)&=x^3+\frac{3}{2}\ell_2(x,1)\,xz+\ell_3(x,1)z^2+b_{00}z^3.
\end{split}
\]
We can see the local geometries of $C_2$ and $C_3$ at $O^*$.
 First we consider the case $\ell_2(0,1)\ne 0$.
 Then we have 
\begin{enumerate}
\item[{\rm{(1)}}] $C_2$ is smooth at $O^*$ and is tangent to
      the limit line $L_{\infty}$ at $O^*$.
\item[{\rm{(2)}}] $C_3$ has an $a_1$ singularity at $O^*$.
\item[{\rm{(3)}}] The intersection multiplicity $I(C_2,C_3;O^*)$ is $3$.
\end{enumerate}
Then, putting $c_1=\ell_2(0,1)$, $g(x,z)$ is given as 
\[
 g(x,z)=c_1^3\,z+\frac{3}{4}\,c_1^2\,x^2+{\text{(higher terms)}}.
\]
Thus $D$ is simply tangent to $L_{\infty}$ at $O^*$.
We write  $g(x,0)=x^2\,(x-\alpha)(x-\beta)$ for some $\alpha,\beta$
 such that $\alpha \beta=3c_1^2/4\ne 0$.
Then if $\alpha\ne \beta$,
        then $L_{\infty}$ is tangent to $D$ at $O^*$ and
        intersects transversely with $D$ at other $2$ points.
     If $\alpha=\beta$, then
        $L_{\infty}$ is a bi-tangent line of $D$.

\begin{lemma}\label{l4}
If $c_1\ne 0$, 
 then the set of singularities $\sing(D)$ is
$\{3a_2\}$ or $\{a_2+a_5\}$.
\end{lemma}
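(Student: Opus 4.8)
The plan is to pin down $\sing(D)$ by first showing that every singular point lies in the affine chart away from $L_{\infty}$, and then reading the singularities off the intersection of the smooth conic $C_2$ with the cubic $C_3$.

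I would first clear the line at infinity. The expansion $g(x,z)=c_1^3z+\frac34 c_1^2x^2+(\text{higher terms})$ already shows that $D$ is smooth and simply tangent to $L_{\infty}$ at $O^*$, while $g(x,0)=x^2(x-\alpha)(x-\beta)$ with $\alpha\beta=3c_1^2/4\ne0$ exhibits the remaining points $x=\alpha,\beta$ of $D\cap L_{\infty}$ as smooth points of $D$. Hence $\sing(D)\cap L_{\infty}=\emptyset$, and on the chart $\{Z\ne0\}$ the factor $Z$ is a unit, so $\sing(D)=\sing\{F_{2,3}=0\}$ there. The singularities of the torus sextic $F_{2,3}=F_2^3-F_3^2$ are of two kinds: inner ones, lying on $C_2\cap C_3$, and outer ones, lying off $C_2$, the latter being of type $a_1$ or $a_2$ by Lemma \ref{l3}.

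Next I would count the inner singularities by intersection theory. The tangency $I(C_2,L_{\infty};O^*)=2$ forces $C_2$ to be an irreducible (hence smooth) conic and gives $C_2\cap L_{\infty}=\{O^*\}$; together with $C_2\cdot C_3=6$ and the datum $I(C_2,C_3;O^*)=3$, the part of $C_2\cap C_3$ away from $O^*$ is a divisor of degree $3$ on $C_2$ supported off $L_{\infty}$. At a point $P$ of this divisor $C_2$ is smooth, and as long as $C_3$ is also smooth at $P$ a local computation identical to that of Lemma \ref{l1}(1) gives $(D,P)\sim a_{3k-1}$ with $k=I(C_2,C_3;P)$. The partitions of $3$ are $1+1+1$, $2+1$ and $3$, producing $3a_2$, $a_2+a_5$ and $a_8$ respectively. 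It then remains to eliminate $a_8$ and any outer singularity, which I would do with the $\delta$-invariant budget of the reduced quartic $D$ (arithmetic genus $3$). An $a_8$ is unibranch with $\delta=4$, so it would sit on a single irreducible component of $D$ of degree at most $4$, whose arithmetic genus is at most $3<4$; this excludes the partition $3$. For the partition $1+1+1$ the three cusps force $D$ to be irreducible, since a reducible reduced quartic carries at most one cusp; then $\delta_{\mathrm{total}}=3$ saturates the budget and no further $a_1$ or $a_2$ can appear, leaving exactly $\{3a_2\}$. For the partition $2+1$ one has $\delta_{\mathrm{total}}=4$, which forces $D$ to split into two rational components; since a reduced quartic has $\delta_{\mathrm{total}}\le3$ when irreducible and $\le4$ when it has two components, this bound is saturated, so no outer singularity survives and $\sing(D)=\{a_2+a_5\}$.

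The step I expect to be the main obstacle is the degenerate sub-case in which $C_3$ fails to be smooth at one of the inner points $P\ne O^*$; equivalently $C_3$ is reducible and one of its nodes lands on $C_2$. There the model $a_{3k-1}$ no longer applies, and one must rerun the local computation (as in the proofs of Lemmas \ref{l1} and \ref{l2}) to verify that such a point still contributes only an $a_2$ or an $a_5$, and in particular introduces no singularity outside the list. Once this case is absorbed, the $\delta$-budget and the intersection count together show that $\{3a_2\}$ and $\{a_2+a_5\}$ exhaust the possibilities.
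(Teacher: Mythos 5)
Your argument follows essentially the same route as the paper's: the paper's proof is just the observation that $C_2\cap C_3\cap L_{\infty}=\{O^*\}$ with $I(C_2,C_3;O^*)=3$ leaves total affine intersection number $3$, the list $\{3a_2\}$, $\{a_2+a_5\}$, $\{a_8\}$ from the partitions of $3$, and the exclusion of $a_8$ because it is unibranch with Milnor number $8$ while an irreducible quartic satisfies $\mu\le 6$ --- which is exactly your $\delta$-budget in the form $\mu=2\delta$ for one branch. What you add beyond the paper is the explicit verification that no outer singularity (and no singularity on $L_\infty$) survives, which the paper leaves implicit; that is a genuine improvement in rigor. The one step you flag as unresolved --- $C_3$ singular at an affine point $P$ of $C_2\cap C_3$ --- closes with the budget you already set up and needs no new local computation: such a $P$ has $I(C_2,C_3;P)\ge 2$, and taking $f_2=v$ as a local coordinate (legitimate since $c_1\ne0$ forces $C_2$ to be a smooth conic) the Newton boundary of $f_2^3-f_3^2$ is that of $v^3-u^4$ when $I=2$, i.e.\ a unibranch point with $\delta=3$ of type $e_6$, which together with the remaining $a_2$ at the leftover intersection point gives $\delta\ge4$ concentrated so that the unibranch $\delta=3$ point must lie on a single component of degree at most $4$; this is incompatible with $\delta\le3$ for an irreducible quartic and with $\delta\le1$ for any proper component, and the case $I=3$ yields a Newton boundary from $(0,3)$ to $(6,0)$ with $\mu\ge10$, impossible on a quartic. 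With that sub-case absorbed your proof is complete and matches the paper's conclusion.
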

\begin{proof}
 As the intersection
 $C_2\cap C_3\cap L_{\infty}=\{O^*\}$ and $I(C_2,C_3;O^*)=3$,
 the sum of the intersection numbers of $C_2\cap C_3$ is $3$
 in the affine space $\Bbb{P}^2\setminus L_{\infty}$. 
 The possible configurations of $\sing(D)$ are
 $\{3a_2\}$, $\{a_5+a_2\}$ and $\{a_8\}$.
 The singularity $a_8$ is locally irreducible 
 but the Milnor number of an irreducible quartics
 is less then or equal to $6$.
 Hence the configuration $\{a_8\}$ does not occur.
 \end{proof}

\begin{remark}{\rm{
 If $D$ is bi-tangent to $L_{\infty}$, then
 the configuration $\{a_5+a_2\}$ does not exist.
 Indeed,
 if the configuration $\{a_5+a_2\}$ exists,
 then $D$ can not be irreducible.
 If $D$ is a union of a line and a cubic,
 then a cubic can not have a bi-tangent line.
 If $D$ is a union of two conics which are tangent to $L_{\infty}$,
 then $D$ can not have any $a_2$ singularity.
 }}\end{remark}
 
Now we consider the case $c_1=\ell_2(0,1)=0$.
Then putting $c_2=\ell_3(0,1)$,
their defining polynomials are given as
\[
\begin{split}
  f_2(x,z)&=a_{00}\,z^2+\ell_2(1,0)\,xz+x^2,\\
  f_3(x,z)&=c_2\,z^2+x^3
 +\left(c_3\,x^2z
 +c_4\,xz^2+b_{00}z^3\right),\ \ c_3,c_4\in \Bbb{C}.
\end{split}
\]
Thus $C_2$ consists of two lines $\ell_1$ and $\ell_2$ such that
 $\ell_1\cap \ell_2=\{O^*\}$ and
 $C_3$ has an $a_2$ singularity at $O^*$ and $I(C_2,C_3;O^*)=4$.
 Then after an easy calculation, we have 
\[
 g(x,z)=-c_2^2\,z^2-2\,c_2\,x^3
\]
Hence $D$ has an $a_2$ singularity at $O^*$.
Thus we have:

 \begin{lemma}\label{l5}
  If $c_1=0$, 
  then $D$ has an $a_2$ singularity on $L_{\infty}$
  and $\sing(D)=\{2a_2+a_2^{\infty}\}$ or $\{a_5+a_2^{\infty}\}$.
   \end{lemma}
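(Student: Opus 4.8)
The plan is to mimic the proof of Lemma~\ref{l4}: count $C_2\cap C_3$ by B\'ezout's theorem and read off the inner singularities of $D$ from the intersection multiplicities, then verify that no other singularities occur. Since $\deg C_2=2$ and $\deg C_3=3$, the total intersection number of $C_2$ and $C_3$ in $\Bbb{P}^2$ is $6$. Restricting to $L_\infty=\{Z=0\}$ we have $F_2(X,Y,0)=X^2$ and $F_3(X,Y,0)=X^3$, so $C_2\cap L_\infty=C_3\cap L_\infty=\{O^*\}$; thus $O^*$ is the only common point of $C_2$ and $C_3$ on $L_\infty$, where $I(C_2,C_3;O^*)=4$ as computed above. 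Hence the affine intersection points of $C_2$ and $C_3$ in $\Bbb{P}^2\setminus L_\infty$ carry total multiplicity $6-4=2$. At each affine point $P$ the coordinate $Z$ is a local unit, so the germ of $D=\{G=0\}$ agrees with that of the torus sextic $\{F_2^3-F_3^2=0\}$, and the affine inner singularities of $D$ are therefore those of this sextic.

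Next I would determine their types. The conic $C_2$ is a pair of lines meeting only at $O^*$, hence smooth at every affine point. Since a cusp is unibranch while every singular point of a reducible cubic is multibranch, the $a_2$ of $C_3$ at $O^*$ forces $C_3$ to be irreducible; then $\sum_P\delta_P(C_3)\le p_a(C_3)=1$, so the cusp at $O^*$ is its only singular point and $C_3$ is smooth at every affine point. With $C_2$ and $C_3$ both smooth at each affine $P\in C_2\cap C_3$, the standard torus-curve computation (cf. the proof of Lemma~\ref{l1}(1) and \cite{Pho}) gives $(D,P)\sim a_{3k-1}$ with $k=I(C_2,C_3;P)$. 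As the affine multiplicities sum to $2$, either two transverse points ($k=1,1$) produce two $a_2$'s, or one tangential point ($k=2$) produces one $a_5$. Combining with the $a_2$ of $D$ at $O^*\in L_\infty$ found above, we obtain $\sing(D)=\{2a_2+a_2^\infty\}$ or $\{a_5+a_2^\infty\}$.

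The step I expect to be the main obstacle is showing that these lists are complete, i.e.\ that $D$ has no outer singularity (a point of $\sing(D)\setminus C_2$); by Lemma~\ref{l3} such a point would be an $a_1$ or $a_2$ and would add at least $1$ to $\sum_P\delta_P(D)$. I would close this gap with a genus count, after first fixing the reducibility type of $D$. In the configuration $\{2a_2+a_2^\infty\}$ the quartic $D$ has three cusps; since cusps are unibranch and a reducible quartic can contain at most one cuspidal component (necessarily a cuspidal cubic), $D$ must be irreducible, whence $\sum_P\delta_P(D)\le \binom{3}{2}=3$ is already attained by the three cusps and no outer singularity can occur. In the configuration $\{a_5+a_2^\infty\}$ we have $\sum_P\delta_P(D)\ge 4>3$, so $D$ is reducible; the cusp then forces $D$ to be a cuspidal cubic together with a line meeting it with multiplicity $3$ (creating the $a_5$), and for this rational two-component quartic the identity $\sum_P\delta_P=p_a-\sum_i g_i+(r-1)=4$ is exactly accounted for by $a_5+a_2$, so again no outer singularity appears. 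The subtlety is precisely that the $\delta$-bound becomes sharp only once the presence of a cusp has pinned down the reducibility type, since an arbitrary reducible quartic may carry as many as six nodes.
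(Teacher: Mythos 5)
Your proof is correct and follows essentially the same route as the paper's: the paper's own argument for Lemma~\ref{l5} is exactly the B\'ezout count ($6-4=2$ left in the affine part of $C_2\cap C_3$) combined with the local classification $(D,P)\sim a_{3k-1}$ at points where both associated curves are smooth, "by a similar argument of Lemma~\ref{l4}." You additionally make explicit two points the paper leaves implicit --- that $C_2$ and $C_3$ are smooth at every affine intersection point, and the $\delta$-invariant/genus count showing no further (outer) singularities can occur --- which fills in rather than departs from the paper's argument.
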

  \begin{proof}
  As $C_2\cap C_3\cap L_{\infty}=\{O^*\}$ and $I(C_2,C_3;O^*)=4$,
  the intersection  $C_2\cap C_3$
  generically consists of two points in the affine space
  $\Bbb{P}^2\setminus L_{\infty}$.
  By a similar argument of Lemma  $\ref{l4}$, 
  we have the assertion.
     \end{proof}

 \section{Statement of the Theorem.}\label{s3}
Let $Q$ be a quartic in $QL$-configurations.
For a quartic $Q$ in one of the $(1),\dots, (13)$ of Table $1$,
$Q$ is irreducible and
$Q$ is not irreducible for $(14),\dots, (19)$ of Table $1$.
Now our main results are the followings:

\begin{theorem}
 Let $Q$ be an irreducible quartic in one of the
 $QL$-configurations.
 \begin{enumerate}
\item[{\rm{(1)}}]
             For $Q$ in the case $(13)$,
             there exists no $(2,3)$ torus decomposition.

 \item[{\rm{(2)}}]
  For $Q$ in the case $(5)$,
   there exist five torus decompositions of type $(2,3)$
  whose three decompositions are  visible  decompositions  
       and two are invisible  decompositions.
\item[{\rm{(3)}}]
 For $Q$ in the remaining cases,             
there exists a unique $(2,3)$ torus decomposition
             for each case.
\end{enumerate}
\end{theorem}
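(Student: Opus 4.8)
The plan is to reduce the theorem to the singularity classifications of Lemmas~\ref{l1}--\ref{l5}. By the line-degeneration framework of \S1--\S2, a $(2,3)$ torus decomposition of the quartic $Q$ presents the sextic $Q+2L_{\infty}$ as a torus curve $F_2^3-F_3^2=Z^2G$, and this is either a \emph{visible} factorization $G=(F_2')^2+(F_1')^3Z$ --- determined by the conic $C_2=\{F_2'=0\}$, the line $L=\{F_1'=0\}$ and the limit line $L_{\infty}=\{Z=0\}$ --- or an \emph{invisible} factorization of the normalized shape $(\sharp)$; conversely each such factorization yields a decomposition. Thus counting the $(2,3)$ torus decompositions of $Q$ is the same as counting the admissible geometric data --- the pairs $(C_2,L)$ in the visible case and the data $(\ell_1,\ell_2,\ell_3,a_{00},b_{00})$ in the invisible case --- that reproduce the prescribed pair $(\sing(Q),\,Q\cap L_{\infty})$ listed in Table~$1$.

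First I would treat the visible factorizations. Because $C_2$ is a conic and $L,L_{\infty}$ are lines, B\'ezout bounds the intersection data: $I(C_2,L)$ and $I(C_2,L_{\infty})$ each total $2$, so the local indices $\iota_1=I(C_2,L;P)$ and $\iota_2=I(C_2,L_{\infty};P)$ range over only finitely many admissible distributions over the points of $C_2\cap(L\cup L_{\infty})$. Substituting these into Lemma~\ref{l1} (for $C_2$ smooth) and Lemma~\ref{l2} (for $C_2$ a line pair), and adjoining the outer singularities, which by Lemma~\ref{l3} can only be $a_1$ or $a_2$, produces a finite list of achievable pairs $(\sing(Q),\,Q\cap L_{\infty})$; I would then match this list against the entries $(1),\dots,(13)$. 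In parallel, Lemmas~\ref{l4} and~\ref{l5}, together with the Remark excluding the bi-tangent $\{a_5+a_2\}$, show that the invisible factorizations realize exactly $\{3a_2\}$, $\{a_5+a_2\}$, $\{2a_2+a_2^{\infty}\}$ and $\{a_5+a_2^{\infty}\}$, which I would match against Table~$1$ in the same way.

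The three assertions then follow by bookkeeping. For case $(13)$ the singularity $a_4$ lies off $L_{\infty}$, so by Lemma~\ref{l1}(1) it would force $3\iota_1-1=4$, which has no integer solution, while a singular $C_2$ gives $e_6$ by Lemma~\ref{l2}(1) and none of the invisible patterns contains an $a_4$; hence no admissible data exist and assertion (1) holds. For case $(5)$ the tricuspidal configuration $3a_2$ is realized by both types: visibly, the two tangencies of $L_{\infty}$ with $Q$ force $C_2$ to meet $L_{\infty}$ transversally at two points (Lemma~\ref{l1}(2)), after which two of the three cusps must sit on $C_2\cap L$ as inner $a_2$ points and the third is an outer $a_2$ (Lemma~\ref{l3}); the $\binom{3}{2}=3$ choices of the inner pair give the three visible decompositions, and the $\{3a_2\}$ branch of Lemma~\ref{l4} supplies the two invisible ones, for a total of five. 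For each remaining case I would prove uniqueness by reconstruction: the singular points of $Q$ and its tangency data with $L_{\infty}$ impose enough incidence conditions to determine the conic $C_2$ (five conditions) and the line $L$ uniquely, so the decomposition is forced.

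The main obstacle will be the counting and uniqueness step, and in particular case $(5)$: the high symmetry of the $3a_2$ configuration is precisely what permits several decompositions, so the delicate points are to check that the three visible and two invisible decompositions are pairwise genuinely distinct --- not identified by the sign $\varepsilon$ of $(\sharp)$ or by a rescaling of $F_2',F_1'$ --- and, conversely, that no further decomposition exists. This demands reconstructing $C_2$ and $L$ explicitly from the cusp positions and verifying each candidate against $(\sharp)$, which is where the real computation lies; by contrast, the non-existence in (1) and the uniqueness in (3) are comparatively routine once the tables coming from Lemmas~\ref{l1}--\ref{l5} are assembled.
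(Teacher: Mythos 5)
Your proposal follows essentially the same route as the paper: restrict the possible decompositions using the singularity classifications of Lemmas 1--5, then reconstruct $C_2$ and $L$ (resp.\ the data of $(\sharp)$) explicitly from the prescribed singularities and tangency data after normalizing by ${\rm PSL}(3,\Bbb{C})$ --- which is exactly how the paper establishes non-existence for $(13)$, existence and uniqueness for the generic cases, and the $3+2$ count for the tricuspidal case $(5)$. One small caveat: the count of exactly two invisible decompositions in case $(5)$ does not come from Lemma 4 itself (which only lists the possible singularity configurations) but from the fact that the line $\{\ell_1=0\}$ must pass through one of the two bi-tangent points $R_1$, $R_2$ --- the paper then uses the $S_3$ symmetry of the tricuspidal quartic to move the five decompositions to one another --- though you do correctly flag this explicit verification against $(\sharp)$ as the remaining computation.
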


Note that a quartic $Q$ for the case  $(5)$ is a $3$ cuspidal quartic.

\begin{remark}
 {\rm{
In section 9, we will show that
if a plane curve  has a $(2,3)$ torus decomposition,
then there exist infinite $(2,3)$ quasi torus decompositions.
 }}
\end{remark}

\begin{theorem}
For each quartic $Q$ of $(1),\dots, (12)$,
 there exists a line degeneration family of sextic
  $C(s):H_3(X,Y,Z,s)^2+H_2(X,Y,Z,s)^3=0$ which are
 $(2,3)$ torus curves such that $C(0)=Q+2L_{\infty}$.
 In particular,
 we have the divisibility $\Delta_{C(s)}(t)\mid \Delta_{Q}(t)$.
\end{theorem}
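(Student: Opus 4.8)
The plan is to feed the $(2,3)$ torus decompositions produced in the proof of Theorem 1 into M.~Oka's procedure for deforming a degenerate torus curve back into a genuine one. First I would note that for each $Q$ among the cases $(1),\dots,(12)$, the decomposition guaranteed by Theorem 1 realizes $Q+2L_{\infty}$ as a \emph{degenerate} $(2,3)$ torus sextic: after homogenizing one has
\[
 Z^2\,G(X,Y,Z)=H_3(X,Y,Z,0)^2+H_2(X,Y,Z,0)^3,
\]
where $G$ is a defining polynomial of $Q$. In the visible situation of \S2.1 the torus data are $H_3(\cdot,0)=F_2'\,Z$ and $H_2(\cdot,0)=F_1'\,Z$, and indeed $H_3^2+H_2^3=Z^2(F_2'^2+F_1'^3Z)=Z^2G$ reproduces $Q+2L_{\infty}$ on the nose.

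Next I would invoke Oka's result recalled in \S1 (Theorem~14 of \cite{okatan}): a visible degeneration of a torus curve of type $(2,3)$ is the central fibre $C(0)$ of a line degeneration, of order $2$ and with limit line $L_{\infty}$, whose generic member is an \emph{irreducible} $(2,3)$ torus sextic. Concretely one removes the factor $Z$ for $s\neq 0$ by a deformation such as
\[
 H_3(X,Y,Z,s)=F_2'\,Z+s\,M_3,\qquad H_2(X,Y,Z,s)=F_1'\,Z+s\,M_2
\]
with $M_3,M_2$ suitable forms of degrees $3$ and $2$; for small $s\neq 0$ the sextic $C(s)=\{H_3^2+H_2^3=0\}$ no longer contains $L_{\infty}$, and the residual quartic of the central fibre is $D=Q$. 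The divisibility recalled in \S1 then gives $\Delta_{C(s)}(t)\mid\Delta_{D}(t)=\Delta_Q(t)$, which is the assertion.

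The step I expect to be the main obstacle is not the divisibility itself but verifying the hypotheses of the line-degeneration definition of \S1 — exhibiting a base point $B\in L_{\infty}\setminus(L_{\infty}\cap D)$ at which every $C(s)$ has multiplicity exactly $2$, off the inner singularities of $D$, and checking irreducibility of the generic $C(s)$. For a visible decomposition this is exactly what Oka's theorem supplies, so the real content reduces to confirming that \emph{each} of $(1),\dots,(12)$ admits a visible $(2,3)$ torus decomposition. This I would read off from \S2: by Lemmas~\ref{l1} and \ref{l2} the visible construction produces exactly the singularities $a_{3\iota_1-1}$, $a_{3\iota_1+\iota_2-1}$ and $e_6$ together with the prescribed tangencies to, and singular points on, $L_{\infty}$, and by Lemma~\ref{l3} the admissible outer singularities; running through the possible values of $(\iota_1,\iota_2)$ exhausts the configurations $2a_2$, $a_5$, $e_6$, $a_2+a_3^{\infty}$, $a_6^{\infty}$, $a_4^{\infty}+a_2$ and their variants with an outer $a_1$, that is, all of $(1),\dots,(12)$. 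The only troublesome case would be one whose sole decomposition is invisible; but by Lemmas~\ref{l4} and \ref{l5} the invisible construction yields only $3a_2$, $a_5+a_2$, $2a_2+a_2^{\infty}$ or $a_5+a_2^{\infty}$, and the one of these lying among $(1),\dots,(12)$, namely $3a_2$ (case $(5)$), by Theorem 1 also carries a visible decomposition, which we use. Finally, since a general $(2,3)$ torus sextic is irreducible, irreducibility of $C(s)$ holds for all small $s\neq 0$ outside the proper closed locus where $H_3$ and $H_2$ acquire a common component, which does not affect the conclusion.
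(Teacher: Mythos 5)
Your proposal is correct and follows essentially the same route as the paper: take the visible decomposition $G=F_2'^2+F_1'^3Z$ from Theorem 1 (which exists for every case $(1),\dots,(12)$, including $(5)$), perturb to $H_3=F_2'Z+s(\deg 3)$, $H_2=F_1'Z+s(\deg 2)$ so that $C(0)=Q+2L_\infty$, and invoke Oka's divisibility for line degenerations. The paper merely makes your ``suitable forms'' explicit, taking the perturbations to be $s\,XK_i(X,Y)$ with $K_i(0,1)\neq 0$ so that $B=[0:1:0]$ is a base point of multiplicity exactly $2$ for all $s\neq 0$.
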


The divisibility $(\star)$ in Introduction also follows
 from Theorem 2 and Corollary 15 of \cite{okatan}.

\begin{proposition}\label{p2}
 For non-irreducible quartics $(14),\dots, (19)$ in Table $1$,
 we have the following:  
 \begin{enumerate}
\item[{\rm{(a)}}] There exist unique $(2,3)$ torus decompositions for
           quartics $(14)$ and $(15)$ and
           their decompositions are represented as
           visible decompositions.
\item[{\rm{(b)}}] The quartic $(16)$ does not admit any torus decompositions.
\item[{\rm{(c)}}] There exist unique $(2,4)$ torus decompositions for
           the quartics $(17)$, $(18)$ and $(19)$.
           Their decompositions are represented as
           invisible  decompositions. 
 \end{enumerate}
 \end{proposition}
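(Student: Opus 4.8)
The plan is to treat the six configurations separately, in each case reducing the defining polynomial of $Q$ to a projective normal form adapted to $L_{\infty}=\{Z=0\}$ and to the prescribed singularities, and then deciding the torus decomposition by an explicit coefficient computation. For a candidate $(2,3)$ decomposition I would use the visible model $G=(F_2')^2+(F_1')^3Z$ of \S 2.1, whose unknowns are the coefficients of the conic $C_2=\{F_2'=0\}$ and the line $L=\{F_1'=0\}$; for a candidate $(2,4)$ decomposition I would set up the invisible model $F_2^4-F_4^2=Z^4G$ (the order-$4$ analogue of the invisible $(2,3)$ situation of \S 2.2, the exponent $4$ being forced by $8=4+\deg G$), imposing $Z^4\mid F_2^4-F_4^2$ as structural relations among the coefficients of $F_2$ and $F_4$, exactly as the relations $(\sharp)$ were forced in \S 2.2. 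In each case existence amounts to solvability of this system, uniqueness to the solution being unique up to the evident scalar ambiguities, and non-existence to the system being inconsistent.

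For part (a), the visible $(2,3)$ cases $(14)$ and $(15)$, Lemmas \ref{l1} and \ref{l3} tell us how to read the associated curves off the singularities. For $(15)$ the $a_5$ must be an inner singularity with $C_2$ tangent to $L$ at a point off $L_{\infty}$ (i.e.\ $\iota_1=2$ in Lemma \ref{l1}(1)), the bitangency of $L_{\infty}$ forces $C_2$ to meet $L_{\infty}$ transversally in two points (Lemma \ref{l1}(2) with $\iota_2=1$), and the $a_1$ appears as an outer node (Lemma \ref{l3}); here the whole intersection $C_2\cap L$ is absorbed at the tacnode. For $(14)$ the $a_3^{\infty}$ comes from $C_2\cap L\cap L_{\infty}$ with $\iota_1=\iota_2=1$ (Lemma \ref{l1}(3)), the affine $a_2$ from a transverse point of $C_2\cap L$ (Lemma \ref{l1}(1), $\iota_1=1$), and again an outer $a_1$. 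Since these incidence and tangency requirements are rigid, the pair $(C_2,L)$ is pinned down, giving both existence and uniqueness; I would confirm this by solving the normal-form system directly.

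For part (c), the invisible $(2,4)$ cases $(17)$–$(19)$, all singularities (apart from the node in $(19)$) lie on $L_{\infty}$, which already rules out the naive direct model $F_2^2+F_1^4$: a conic meets the line $\{F_1=0\}$ in at most one point of $L_{\infty}$, so that model cannot produce two tacnodes at infinity. I would therefore establish the $(2,4)$ analogue of Lemmas \ref{l4} and \ref{l5}: imposing $Z^4\mid F_2^4-F_4^2$ yields normalized coefficient relations, and the local computation at a point of $L_{\infty}$, run exactly as in \S 2.2 but with the fourth powers producing $y^4$- and $y^8$-leading terms, gives an $a_3$ in the transverse configuration and an $a_7$ in the tangent one. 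This produces the two tacnodes of $(17)$, the single $a_7$ of $(18)$, and the two tacnodes plus outer node of $(19)$, with uniqueness following once the coefficient relations are solved.

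The heart of the proposition, and the step I expect to be the main obstacle, is the non-existence in part (b). The impossibility is not visible from the local singularity types alone: an $a_5$ is perfectly admissible \emph{off} the limit line (Lemma \ref{l1}(1) with $\iota_1=2$) and an $a_2$ can occur as an outer singularity, so one cannot merely quote a list. What must be shown is that the full configuration of $(16)$ — an $a_5$ and an $a_2$ both lying on $L_{\infty}$, with $L_{\infty}$ meeting $Q$ in exactly these two points (type (iii)) — is incompatible with every admissible model. For the invisible $(2,3)$ model the enumeration of \S 2.2 shows a singularity on the limit line is never worse than $a_2^{\infty}$, and for the visible $(2,3)$ model Lemma \ref{l1}(3) shows $3\iota_1+\iota_2=6$ has no solution with $\iota_2\ge 1$, so no $a_5$ can sit on the limit line; the remaining case is that the decomposition's limit line is distinct from $L_{\infty}$, and here the argument must control the position of that limit line relative to the $a_5$ and show that the type-(iii) incidence of $L_{\infty}$ with $Q$ can then not be met. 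For the $(2,4)$ models the same local analysis shows that the singularities forced onto $L_{\infty}$ are tacnodal of type $a_{4k-1}$ (that is $a_3,a_7,\dots$), and in particular never $a_5$ or $a_2$; the degenerate-conic subcases must also be ruled out, but these likewise fail to put an $a_5$ on $L_{\infty}$. Carrying out this last elimination rigorously — above all pinning down the limit line in the $(2,3)$ case — is the delicate point, and I would settle it by the same normal-form computation, exhibiting that the coefficient system attached to $(16)$ has no solution.
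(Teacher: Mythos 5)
Your plan coincides with the paper's own treatment in \S 8: the quartics $(14)$ and $(15)$ are handled by the visible $(2,3)$ model of \S 2.1 together with Lemmas \ref{l1} and \ref{l3} (your identification of which intersection data produce the $a_5$, the $a_3^{\infty}$, the $a_2$ and the outer node matches the tabulated decompositions), the quartics $(17)$--$(19)$ by exactly the invisible $(2,4)$ model $F_2^4-F_4^2=Z^4G$ that you set up, which the paper reduces to $G=F_2^2-c'Z^4$ before classifying the singularities on the limit line, and $(16)$ is excluded because its singularity configuration appears on none of the resulting lists; the paper then simply tabulates the explicit normal forms. Two remarks. First, your exponent count in the $(2,4)$ local analysis, giving $a_{4\iota-1}$ (an $a_3$ at a transverse point and an $a_7$ at a tangent point of $C_2\cap L_{\infty}$), is the correct one and is what the configurations $2a_3^{\infty}$, $a_7^{\infty}$, $2a_3^{\infty}+a_1$ require; the lemma as printed in \S 8.1 reads $a_{3\iota-1}$, which must be a misprint. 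Second, the case you single out as the principal obstacle in (b) --- a decomposition whose limit line differs from $L_{\infty}$ --- is vacuous: a torus decomposition here is by definition an identity among affine polynomials on $\Bbb{C}^2=\Bbb{P}^2\setminus L_{\infty}$, and for a quartic the only admissible degree patterns are $f=f_2^2+f_1^3$, which homogenizes to the visible form, and $f=f_3^2+f_2^3$ with cancellation of the two top homogeneous parts, which is precisely the condition $Z^2\mid F_2^3-F_3^2$ analyzed in \S 2.2; in either case the distinguished line is $\{Z=0\}$, i.e.\ $L_{\infty}$ itself. Once that is observed, part (b) follows immediately from the classifications exactly as you argue for the other configurations: no visible or invisible $(2,3)$ model and no $(2,4)$ model places an $a_5$ on the limit line, whereas $(16)$ has $a_5^{\infty}$.
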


\begin{remark}{\rm{
For the quartics $(13)$ and $(16)$,
 there are not torus decomposition.
By the classifications of singularities in \S 2,
 their singularities
do not occur as the quartics with
 visible or invisible $(2,3)$ torus decompositions.
}}\end{remark}

\section{The proof of Theorem 1 of $(3)$}\label{s4}
\subsection{Strategy}
 There are $13$ configurations  of singularities of
the quintic $Q+ L_{\infty}$ as in $(1)$, $\dots$, $(13)$ in Table 1.
  We divide these quintic into $5$ cases ${\rm{(i)}},\dots, {\rm{(v)}}$
  as in Introduction.
Note that the case ${\rm{(iii)}}$ does not appear when $Q$ is irreducible.

By the classification of the singularities for
the visible and invisible factorizations in \S 2,
for the quartics $(1),\dots, (12)$ except the case $(5)$,
the possible torus decomposition must be visible and
unique if it exists.
The quartic $(5)$ has an exceptional property.
It has both visible and invisible torus decompositions.
Thus we treat this case in the next section.

First, we construct explicit quartics $Q:=\{F=0\}$ with 
the prescribed properties at infinity.
By the action of ${\rm{PSL}}(3,\Bbb{C})$ of $\Bbb{P}^2$,
   we can put the singularities at fixed locations.
   Then we construct the respective
torus decompositions in \S 2.\\

\noindent
{\bf{Step 1. Construction of an explicit quartic $Q$.}}
By the classification of the singularities for  invisible  decomposition
case
(Lemma 4 and Lemma 5), the quartics in cases (1)$\sim$ (12) except the
case (5) can not have invisible torus decomposition.
 So we only need the possible visible decomposition for these quartics.
As the computations are boring and easy,
we explain the quartic $(1)$ in Table 1 in detail and
for the other cases we simply give the result of the computations.

 \noindent
{\bf{The quartic $(1)$ in Table 1.}}
 In this case, $L_{\infty}$ is a bi-tangent line of $Q$
and the singularity is $\sing(Q)=\{2a_2\}$.
We construct a quartic $Q$ with $2a_2$ which
$L_{\infty}$ is a bi-tangent line.
Let  $\Sigma(Q):=\{P_1,P_2\}$ be the singular locus of $Q$  and
let $Q\cap L_{\infty}:=\{R_1,R_2\}$ be the bi-tangent points.
 By the action of $\rm{PSL}(3,\Bbb{C})$ on $\Bbb{P}^2$,
we can put the locations of points:
\[
 P_1=[1:0:1],\quad
 P_2=[-1:0:1],\quad R_1=[1:1:0]
\]
and we may assume that the tangent directions at $P_1$ and $P_2$
are given as 
\[
\{x-1=0\},\quad \{x+1=0\} {\text{ \ respectively. }}
\]
We start from the generic quartic $F(X,Y,Z)=\sum_{\nu}c_\nu
X^{\nu_1}Y^{\nu_2}Z^{\nu_3}$ with $\nu=(\nu_1,\nu_2,\nu_3)$
with $\nu_1+\nu_2+\nu_3=4$.
The 
necessary conditions are
\[
 \begin{split}
&F(P_1)=\frac{\partial F}{\partial X}(P_1)
       =\frac{\partial F}{\partial Y}(P_1)
       =\frac{\partial F}{\partial YY}(P_1)
       =\frac{\partial F}{\partial XY}(P_1)
       =0,\\
&F(P_2)=\frac{\partial F}{\partial X}(P_2)
       =\frac{\partial F}{\partial Y}(P_2)
       =\frac{\partial F}{\partial YY}(P_2)
       =\frac{\partial F}{\partial XY}(P_2)
       =0,\\
&F(R_1)=\frac{\partial F}{\partial X}(R_1)=0.
 \end{split}
\]
Under the above conditions,
  we have $F(X,Y,0)=(X-Y)^2(X-\alpha\,Y)(X-\beta\, Y)$.
As $L_{\infty}$ is bi-tangent to $Q$,
we must have $\alpha=\beta$.
Thus we have $13$ equations of the coefficients of $F$.
By solving these equations,  $F$  has the following form:
\[
Q:\quad  F(X,Y,Z)=Z^4+2(Y^2-X^2)Z^2+t\,Y^3Z+(Y^2-X^2)^2=0,\quad t\ne 0.
\]
Then another bi-tangent point $R_2$ is $[-1:1:0]$.\\

\noindent
{\bf{Step 2. Torus decompositions.}}
Now we consider the possibilities of visible torus decompositions of $Q$.
Thus we assume that $F$ is written as follows: 
\[
   F(X,Y,Z)=F_2'(X,Y,Z)^2+F_1'(X,Y,Z)^3Z.
 \]
Two $a_2$ singularities are inner singularities of $Q$.
Hence we assume that $C_2\cap L=\{P_1,P_2\}$ and
$C_2$ is smooth at $P_1$ and $P_2$ where $C_2=\{F_2'=0\}$ and
$L=\{F_1'=0\}$.
Then we have
\[\begin{split}
 & F_1'(P_1)=F_1'(P_2)=0,\qquad F_2'(P_1)=F_2'(P_2)=0.
\end{split}
\]
Then $L$ is the line pass through $P_1$ and $P_2$.
Hence we get $F_1'(X,Y,Z)=s_1\,Y$ where $s_1\in {\Bbb C^*}$.
As $C_2$ is smooth at $P_i$,
the tangent directions of $C_2$ at $P_i$ must coincide with
that of $Q$ for $i=1,2$.
 Hence $F_2'$ also satisfies the following conditions:
\[
   \dfrac{\partial F_2'}{\partial Y}(P_1)=
             \dfrac{\partial F_2'}{\partial Y}(P_2)=0.
\]
Then $\sing(Q)=\{2a_2\}$ by Lemma \ref{l1}.
 As $R_1\in Q$, $C_2$ passes through $R_1$.
 Namely $F_2'$ satisfies the condition $F_2'(R_1)=0$.
Then $F_2'$ takes the following form: 
\[
 F_2'(X,Y,Z)=s_2\,(X^2-Y^2-Z^2),\ \ s_2\in \Bbb{C}^*.
\]
Note that $C_2$ also passes through
another bi-tangent point $R_2$.
Hence $Q$ satisfies the condition (i) by Lemma \ref{l1}.
Therefore we get the family of quartics with 
 visible factorizations:
\[
 F(X,Y,Z)=s_2^2\,(X^2-Y^2-Z^2)^2+s_1^3\, Y^3\,Z=0.
\]
Finally, we put $s_1^3=t$ and $s_2^2=1$.
Then we can see easily 
\[
\begin{split}
  (X^2-Y^2-Z^2)^2+t\,Y^3Z
 &=Z^4-2 \left(X^2+Y^2 \right)Z^2+t\,Y^3Z+ (X^2+Y^2)^2\\
      &=F(X,Y,Z). 
\end{split}
\]

\noindent
{\bf{Step 3. Uniqueness.}}
By the classification of the singularities for
the visible and invisible factorizations in \S 2,
we can see easily that the possible torus decompositions are visible.
Then two $a_2$ singularities
must be inner singularities and the  
corresponding curves are uniquely determined by the above arguments.

\begin{figure}[H]
 \begin{center}
  \includegraphics[scale=0.75]{./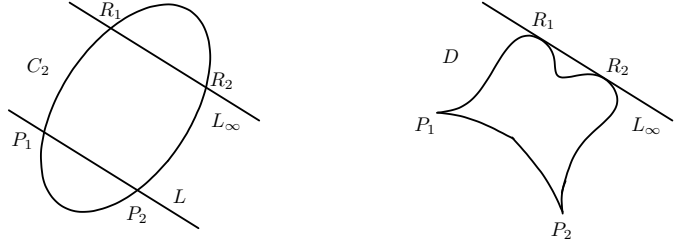}
 \end{center}
 \caption{The quartic (1) in Table 1} \label{s1}
\end{figure}

For the other quartics, we only give the results of calculations.
%
 {\renewcommand{\arraystretch}{1.36}
\begin{center}
\begin{longtable}{|c|c|}
 \hline
\multicolumn{2}{|c|}{Quartic (1)}\\ \hline
 Singularities & $2a_2$ at $[1:0:1]$, $[-1:0:1]$ \\  
 \hline
$Q\cap L_\infty$ &  bi-tangent at $[1:1:0]$, $[-1:1:0]$  \\  \hline
Torus decomposition & $(X^2-Y^2-Z^2)^2+t\,Y^3Z=0$\\ \hline

\multicolumn{2}{|c|}{Quartic (2)}\\ \hline
 Singularities &  $2a_2$ at $[1:0:1]$, $[-1:0:1]$\\
 \hline
$Q\cap L_\infty$ & tangent multiplicity 4  at $[0:1:0]$.
 \\  \hline
Torus decomposition & $(X^2-Z^2)^2+t\,Y^3Z=0$.   \\ \hline

\multicolumn{2}{|c|}{Quartic (3)}\\ \hline
 Singularities &  $2a_2+a_1$ at
      $[0:0:1]$, $[1:1:1]$, $[-1:1:0]$.
     \\  \hline
$Q\cap L_\infty$ & bi-tangent at
      $[\sqrt{3}:1:0]$, $[-\sqrt{3}:1:0]$.
 \\  \hline
Torus decomposition & $(4Z^2-6YZ-X^2+3Y^2)^2+16\,(Y-Z)Z=0$.
     
     \\ \hline

\multicolumn{2}{|c|}{Quartic (4)}\\ \hline
 Singularities & $2a_2+a_1$ at
      $[0:0:1]$, $[1:1:1]$, $[-1:1:0]$.
     \\  \hline
 $Q\cap L_\infty$ &
     tangent multiplicity 4  at $[0:1:0]$ 
 \\  \hline
Torus decomposition & $(2Z^2+X^2-3YZ)^2+4\,(Y-Z)Z=0$.
     
     \\ \hline
 
\multicolumn{2}{|c|}{Quartic (6)}\\ \hline
 Singularities &
$a_2+a_3^{\infty}$ at
      $[0:0:1]$, $[-1:1:0]$.
     \\  \hline
 $Q\cap L_\infty$ &
  singular at $[-1:1:0]$ and tangent at $[1:1:0]$.
      \\  \hline
Torus decomposition & $(X^2-2\,XZ-Y^2)^2+t\,(X+Y)Z=0$.
     
     \\ \hline

\multicolumn{2}{|c|}{Quartic (7)}\\ \hline
 Singularities &
     $a_5$ at $[0:0:1]$.
     \\  \hline
 $Q\cap L_\infty$ &
   bi-tangent at $[1:1:0]$, $[-1:1:0]$.
 \\  \hline
Torus decomposition & $(XZ+s(X^2-Y^2))^2+s(ts-2)\,X^3Z=0$.
     
     \\ \hline
 
\multicolumn{2}{|c|}{Quartic (8)}\\ \hline 
Singularities &  $a_5$ at $[0:0:1]$.
     \\ \hline
$Q\cap L_\infty$ &  tangent multiplicity 4  at $[1:0:0]$.
      \\  \hline
Torus decomposition & $(XZ-s\,Y^2)^2+t\,X^3Z=0$\\ \hline

\multicolumn{2}{|c|}{Quartic (9)}\\ \hline
 Singularities & $a_6^{\infty}$ at $[0:1:0]$.
     \\ \hline
$Q\cap L_\infty$ &  singular at $[0:1:0]$ and tangent at $[-1:1:0]$.
           \\  \hline
Torus decomposition &
     $(t_2^2Z^2+t_3XZ-t_2X(X+Y))^2$\hspace{3cm} \\ \ &
     \hspace{3.5cm}$+t_2(2t_3+t_1t_2)\,X^3Z=0$.
     \\ \hline

\multicolumn{2}{|c|}{Quartic (10)}\\ \hline
 Singularities & $a_4^{\infty},a_2$ at  $[0:0:1]$, $[0:1:0]$.
          \\ \hline
$Q\cap L_\infty$ &   singular at $[0:1:0]$.
           \\  \hline
Torus decomposition & $(YZ-t_2X^2)^2+t\,X^3Z=0$
     \\ \hline

\multicolumn{2}{|c|}{Quartic (11)}\\ \hline
 Singularities &  $e_6$ at $[0:0:1]$.
               \\ \hline
$Q\cap L_\infty$ &   bi-tangent at $[1:1:0]$, $[-1:1:0]$.
           \\  \hline
Torus decomposition & $(X^2+Y^2)^2+t\,X^3Z=0$
     \\ \hline

\multicolumn{2}{|c|}{Quartic (12)}\\ \hline
 Singularities &  $e_6$ at $[0:0:1]$.
               \\ \hline
$Q\cap L_\infty$ & tangent multiplicity 4  at $[1:0:0]$.
           \\  \hline
Torus decomposition &  $Y^4+t\,X^3Z=0$.
     \\ \hline

\end{longtable}
\end{center}
}

\section{Proof of Theorem  1 of $(2)$.}\label{s5}
In this section, we consider the exceptional case (5) in Table $1$.
This case (5) is exceptional as the classification of the singularities tell us
that it may have a invisible case 
as well as visible decompositions.
 Let $Q=\{F=0\}$ be a quartic with $\sing(Q)=\{3a_2\}$.
Then, by the class formula (\cite{Okadual}),
$Q$ has a unique  bi-tangent line and
we take $L_{\infty}$ as the bi-tangent line of $Q$.
We put the singular locus $\Sigma(Q)=\{P_1,P_2,P_3\}$
and the intersection  $Q \cap L_{\infty}:=\{R_1, R_2\}$.
By the action of $\rm{PSL}(3,\Bbb{C})$ on $\Bbb{P}^2$,
we can put the locations:
 \[
  P_1=[1:0:1],\quad
  P_2=[-\frac{1}{2}:\frac{\sqrt{3}}{2}:1],\quad
  P_3=[-\frac{1}{2}:-\frac{\sqrt{3}}{2}:1],\quad 
  R_1=[I,1,0]
   \]
   where $I=\sqrt{-1}$.
By direct computations,
 the defining polynomial $F$ of $Q$ is obtained by
\[
 F(X,Y,Z)=Z^4-6(X^2+Y^2)Z^2+8(X^2-3Y^2)XZ-3(X^2+Y^2)^2.
\]
Then another bi-tangent point $R_2$ is $[-I:1:0]$.
Note that there is 
 no free parameters left.

Now we consider two transformations
$\sigma$, $\tau:\Bbb{P}^2\to \Bbb{P}^2$ which are defined as 
\[
 \sigma(X:Y:Z):=(X:-Y:Z),\quad
   \tau(X:Y:Z):=(X:Y:Z)A,
\]
where
\[
 A= \begin{pmatrix}
         \cos \theta & -\sin  \theta & 0 \\
         \sin \theta & \cos   \theta & 0\\
          0&0&1              
     \end{pmatrix}, \quad \theta=-\frac{2}{3}\pi. 
\]
Consider the subgroup $G$ of ${\rm{PSL(3;\Bbb{C})}}$
generated by $\sigma$ and $\tau$.
Observe that $G\cong S_3$ and $\sigma^2=\tau^3=(\sigma\tau)^2=e$.
Then we can see that $L_{\infty}$ and $Q=\{F=0\}$ are
stable under the action of $G$:
\[
 F(X,Y,Z)=F(\sigma(X,Y,Z))=F(\tau(X,Y,Z)).
\]
We observe also the following.
\[
\begin{split}
& \sigma(R_1)=R_2,\  \sigma(R_2)=R_1,\
 \sigma(P_1)=P_1, \ \sigma(P_2)=P_3, \  \sigma(P_3)=P_2.\\
& \tau(R_i)=R_i,\ i=1,2,\qquad 
 \tau(P_i)=
 \begin{cases}
  P_{i+1} & {\text{if $i=1,2$}}\\
  P_1     & {\text{if $i=3$.}}
  \end{cases}
\end{split}
\]

\noindent
{\bf{Visible factorization.}}
Now we consider the possibilities of $(2,3)$ visible factorization
of $Q=\{F=0\}$.
We assume that $F$ is written as follows: 
\[
   F(X,Y,Z)=F_2'(X,Y,Z)^2+F_1'(X,Y,Z)^3Z.
 \]
In this case,
two of $P_1$, $P_2$, $P_3$ must be inner singularities
and the rest is an outer singularity.
Thus we have three possible cases for these choices:
\begin{enumerate}
 \item[{\rm{(1)}}] $P_1$ is an outer singularity and
       $P_2$, $P_3$ are inner singularities.
 \item[{\rm{(2)}}] $P_2$ is an outer singularity and
       $P_1$, $P_3$ are inner singularities.
 \item[{\rm{(3)}}] $P_3$ is an outer singularity and
       $P_1$, $P_2$ are inner singularities.
 \end{enumerate}

First we assume  the case $(1)$.
Then  $L=\{F_1'=0\}$ and $C_2=\{F_2'=0\}$ are satisfy the following.
\begin{itemize}
 \item $P_1$ is an outer singularity.
 \item $L$ is the line passing $P_2$ and $P_3$.
 \item $C_2$ passes through $P_2$, $P_3$, $R_1$ and $R_2$. 
 \end{itemize}
Then the defining polynomials $F_1'$ and $F_2'$ are obtained by
\[
 F_1'(X,Y,Z)=-\frac{1}{3}t^2(Z+2X),\quad 
 F_2'(X,Y,Z)=\frac{t^3}{6}(Z^2+4XZ+Y^2+X^2).
\]
We take $t$ as one of the solutions  $t^6+108=0$.
Then $F$ is decomposed into 
\[
\tag{V-1}
  F(X,Y,Z)=-3(Z^2+4XZ+Y^2+X^2)^2+4(Z+2X)^3Z.
\]
Note that $C_2$, $L$, $P_1$ and $\{P_2, P_3\}$
 are stable by the action of $\sigma$.

Next we consider the case $(2)$.
The singular locus $\Sigma(Q)$ is stable
by the action of $\tau$ and $\tau(C_2\cap L)=\{P_3,P_1\}$.
Hence $P_2$ is the outer singularity. 
Thus we have
 \[
 \tag{V-2}
    \begin{split}
 F(X,Y,Z)&= F(\tau(X,Y,Z))=F_2(\tau(X,Y,Z))^2+F_1(\tau(X,Y,Z))^3Z\\
& =-3(Z^2-2XZ-2\sqrt{3}YZ+X^2+Y^2)^2+4(Z-X-\sqrt{3}Y)^3.
\end{split}
 \]
By the same argument, we have one more different torus decomposition:
  \[
    \tag{V-3}
      \begin{split}
 F(X,Y,Z)&=F(\tau^2(X,Y,Z))=F_2'(\tau^2(X,Y,Z))^2+F_1'(\tau^2(X,Y,Z))^3Z\\
      & =-3(Z^2-2XZ+2\sqrt{3}YZ+X^2+Y^2)^2+4(Z-X+\sqrt{3}Y)^3.
       \end{split}
  \]
Thus we have three different torus decompositions
(V-1), (V-2) and (V-3).\\

\noindent
{\bf{Invisible factorization.}}
Next we consider $(2,3)$ invisible factorization (\S 2): 
 \[
 Z^2F(X,Y,Z)=F_2(X,Y,Z)^3-F_3(X,Y,Z)^2
\]
where $F_2$ and $F_3$ are defined by
\begin{eqnarray*}(\sharp)\quad   
\begin{split}
 F_2(X,Y,Z)&=\ell_1(X,Y)^2+\ell_2(X,Y)\,Z+a_{00}\,Z^2,\\ 
 F_3(X,Y,Z)&=\ell_1(X,Y)^3+\frac{3}{2}\,\ell_1(X,Y)\,\ell_2(X,Y)\,Z
                 +\ell_3(X,Y)\,Z^2+b_{00}\,Z^3
 \end{split}
\end{eqnarray*}
where $\ell_i$ is a linear form for $i=1,2,3$.
By the argument in \S 2.2,
the singularity locus $P_1$, $P_2$ and $P_3$ are
  inner singularities.
Hence we have the conditions:
\[
 (*_1)\quad F_2(P_i)=F_3(P_i)=0,\quad i=1,2,3.
\]
Moreover one of the bi-tangent points is obtained by
the intersection point $\{\ell_1=0\}\cap L_{\infty}$.

First we assume that $\{\ell_1=0\}\cap L_{\infty}=\{R_1\}$.
 By solving conditions
 $(*_1)$ and  $\ell_1(R_1)=0$,
 we have $a_{00}=0$, $b_{00}=t^3/2$ and  
\[
\ell_1(X,Y)=t\,(X-IY),\quad
\ell_2(X,Y)=-t^2\,(X+IY),\quad
\ell_3(X,Y)=0.
\]
Thus $F_2$ and $F_3$ are given by
\[\begin{split}
 F_2(X,Y,Z)&=t^2(X-IY)^2-t^2(X+IY)Z,\\
 F_3(X,Y,Z)&=\frac{t^3}{2}\left( Z^3-3(X^2+Y^2)Z+2(X-IY)^3 \right)
      \end{split}
\]
 Note that $C_2=\{F_2=0\}$ and
           $C_3=\{F_3=0\}$ are stable by the action $\sigma$.
Then we have 
\[
 \frac{t^6}{4}Z^2F(X,Y,Z)=F_2(X,Y,Z)^3-F_3(X,Y,Z)^2.
\]
Hence taking $t$ as one of the solutions  $t^6=4$.
Thus we have an invisible torus decomposition: 
\[\tag{In-1}
\begin{split}
Z^2F(X,Y,Z)&=((X+IY)Z-(X-IY)^2)^3+(Z^3-3(X^2+Y^2)Z+2(X-IY)^3)^2.
\end{split}
\]

Next we consider the case $\{\ell_1=0\}\cap L_{\infty}=\{R_2\}$.
As the  singular locus $\Sigma(Q)$ is stable
by the action of $\sigma$ and
$\sigma(R_1)=R_2$,
we have another invisible torus decomposition from (In-1):
%
           \[\tag{In-2}
 Z^2F(\sigma(X,Y,Z))=((X-IY)Z-(X+IY)^2)^3+(Z^3-3(X^2+Y^2)Z+2(X+IY)^3)^2.
\]
Thus we have two different invisible torus decompositions
(In-1) and (In-2).

\begin{figure}[H]
 \begin{center}
  \includegraphics[scale=0.65]{./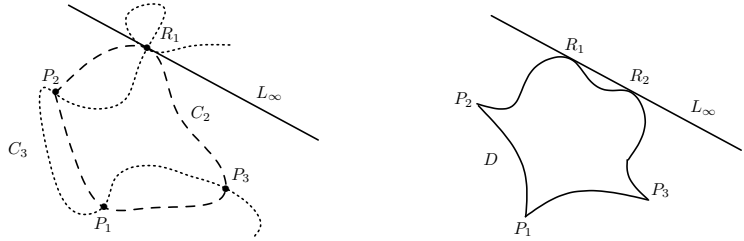}
 \end{center}
 \caption{Invisible factorization (In-1) of the quartic  (5)}
 \label{a3}
\end{figure}
\vspace{-0.4cm}

We have shown in the above argument
 that the three visible decompositions move each other 
by the action of $\sigma$:
\[\begin{matrix}
   {\rm{(V{\text{-}}1)}}&\overset{\sigma}{\to} &
   {\rm{(V{\text{-}}2)}}&\overset{\sigma}{\to} &
   {\rm{(V{\text{-}}3)}}.
   \end{matrix}
   \]
  We also showed that
   the two invisible decompositions move each other 
by the action of $\tau$:
\[
 \begin{matrix}
   &  {\rm{(In{\text{-}}1)}}&\overset{\tau}{\to} 
               &{\rm{(In{\text{-}}2)}}.
   \end{matrix}
\]

In this case, there exist other quasi torus decompositions.
 We discuss in section 9.



\section{Proof of Theorem 2}\label{s6}
Let $U$ be an open neighborhood of $0$
and let $C(t)=\{F_{2,3}(X,Y,Z,t)=0\}$ $(t\in U)$ 
be a $(2,3)$ torus curve which is defined by
 \[
  F_{2,3}(X,Y,Z,t)=H_3(X,Y,Z,t)^2+H_2(X,Y,Z,t)^3.
 \]
 Fix $B=[0:1:0]$ in $\Bbb{P}^2$.
 We consider the family $\{C(t)\}_{t\in U}$ which has
 $B$ as the base point with multiplicity $2$. 
We assume that
the defining polynomials have the following form: 
\[\tag{$*_2$}
\begin{cases}
\begin{split}
 H_2(X,Y,Z,t)&=F_1'(X,Y,Z)Z+t\,X K_1(X,Y),\\
 H_3(X,Y,Z,t)&=F_2'(X,Y,Z)Z+t\,X K_2(X,Y)
\end{split}
\end{cases}
\]
where $F_i'(X,Y,Z)$ and $K_i(X,Y)$ homogeneous polynomials of degree $i$ 
such that $K_i(0,1)\ne 0$ for $i=1,2$.
Then $B$ is in $C(t)$ with multiplicity $2$ for $t\ne 0$ and
 $\{C(t)\}_{t\in U}$ degenerates into $C(0)=D+2L_{\infty}$
where $D=\{G=0\}$ is a visible factorization of torus curve
which is defined as
\[
   G(X,Y,Z)=F_2'(X,Y,Z)^2+F_1'(X,Y,Z)^3Z.
 \]
Thus we can construct
a line degeneration of $(2,3)$ torus sextic.
For example, we give line degeneration for the case $(1)$.
Let $Q=\{F=0\}$ be the quartic which is satisfies the condition of $(1)$.
In \S 4, we obtained the defining polynomials $F$, $F_1$ and $F_2$ as
 \[\begin{split}
Q:&\quad  F(X,Y,Z)=Z^4+2(Y^2-X^2)Z^2+t\,Y^3Z+(Y^2-X^2)^2=0, \\    
L:&\quad F_1'(X,Y,Z)=s_1\,Y=0,\\
C_2:&\quad  F_2'(X,Y,Z)=s_2\,(X^2+Y^2-Z^2)=0   
   \end{split}
 \]
where $s_1$ and $s_2$ are  one of the solution
$s_1^3=t$ and $s_2^2=1$ respectively.
Let $K_i(X,Y)$ be any  homogeneous polynomial of degree $i$
such that $K_i(0,1)\ne 0$ for $i=1$, $2$.
We take $H_2$ and $H_3$ as $(*_2)$ and then
the family $\{C(t)\}_{t\in U}$  degenerates into $C(0)=Q+2L_{\infty}$.

\section{Degenerate families of $QL$-configurations.}\label{s7}
Let $\mathcal{QL}$ be the set of quaritcs of $QL$-configurations
and let $L_{\infty}$ be the fixed line at infinity.
We consider the subset $\mathcal{QL}(n)$ of
$\mathcal{QL}$ which is the set of quartic $Q$ whose
configuration $Q\cup L_{\infty}$ is of the type $(n)$ in Table 1
for $n=1,\dots, 12$.
It is sesy to see that
$\mathcal{QL}(n)$ is a connected subspace of the space of quartic
under the canonical topology.
This implies that for any $Q$, $Q'\in \mathcal{QL}(n)$
the topology of
$\Bbb{C}^2\setminus Q$ and $\Bbb{C}^2\setminus Q'$
are homeomorphic.

For the comparison of the topology of $\Bbb{C}^2\setminus Q$
and $\Bbb{C}^2\setminus Q'$ for $Q\in \mathcal{QL}(n)$ and
$Q'\in \mathcal{QL}(m)$,
we consider the degeneration problem among these subsets.
Suppose that there exists an analytic family $Q(s)$ $(s\in U)$ of
quartic for an open set $U$ of the origin $0\in  \Bbb{C}$
such that $Q(s)\in \mathcal{QL}(n)$ for $s\in U\setminus \{0\}$
and  $Q(0)\in \mathcal{QL}(m)$ for some $n$, $m$ with $n\ne m$. 
In particular, $Q(s)\cup L_{\infty}\to Q(0)\cup L_{\infty}$.
Then, by an degenerate properties (\cite{OkaSurvey}),
we have the surjectivity
 $\pi_1(\Bbb{C}\setminus Q(0))\twoheadrightarrow
 \pi_1(\Bbb{C}\setminus Q(s))$ $(s\ne 0)$
and the divisibility of the Alexander polynomials
 $\Delta_{Q(s)}(t) \mid \Delta_{Q(0)}(t)$.
 We denote this situation as $\mathcal{QL}(n) \to \mathcal{QL}(m)$.

\begin{proposition}
 There exist the following degeneration families among the set
 $\mathcal{QL}(n)$, $n=1,\dots, 12$.
 \begin{figure}[H]
  \[
 \begin{matrix}
  && && &\mathcal{QL} (10)&&&&\\
 & &&&&\uparrow\\
  & && \mathcal{QL} (4)&\leftarrow &\mathcal{QL} (2) &\to &\mathcal{QL} (8)& \to &\mathcal{QL} (12)\\
&&& \uparrow&&  \uparrow && \uparrow && \uparrow\\
  &  \mathcal{QL} (5)&\leftarrow &\mathcal{QL} (3)&\leftarrow &\mathcal{QL} (1) &\to &\mathcal{QL} (7)& \to &\mathcal{QL} (11)\\
&&&&&\downarrow \\
  &  &&&& \mathcal{QL} (6)& \to &\mathcal{QL} (9)
 \end{matrix}
\]
 \end{figure}
 \noindent

 This follows from the following explicit degenerations:
 \begin{enumerate}
  \item[{\rm{(1)}}]
        There exist a family of quintic
       $\{C_{s,t,u}\} \subset \mathcal{QL}(1)$ where
               $C_{s,t,u}=Q_{s,t,u}+L_{\infty}$
        $(stu\ne 0)$ with
        $3$ parameters such that
        \[\begin{split}
        & C_{0,t,u}\in \mathcal{QL}(7),\quad
         C_{0,t,0}\in \mathcal{QL}(11)
         {\text{\ \  for \ $t\ne  0$,}}\\  
        & C_{s,0,u}\in \mathcal{QL}(2), \quad
         C_{0,0,u}\in \mathcal{QL}(8), \quad
         C_{0,0,0}\in \mathcal{QL}(12).
         \end{split}
        \]
  \item[{\rm{(2)}}]
       There exist a family of quintic
       $\{C_{s,t,u}\} \subset \mathcal{QL}(1)$ where
               $C_{s,t,u}=Q_{s,t,u}+L_{\infty}$
        $(stu\ne 0)$ with
        $3$ parameters such that
     \[\begin{split}
        & C_{0,t,u}\in \mathcal{QL}(3),\quad
         C_{0,t,0}\in \mathcal{QL}(5)
         {\text{\ \ for \ \ $t\ne  0$,}}\\  
        & C_{s,0,u}\in \mathcal{QL}(2), \quad
        C_{0,0,u}\in \mathcal{QL}(4){\text{\  \ for  \ $u\ne  0$}}.
          \end{split}
        \]
  \item[{\rm{(3)}}]
           There exist a family of quintic
       $\{C_{s,t}\} \subset \mathcal{QL}(1)$ where
               $C_{s,t}=Q_{s,t}+L_{\infty}$ 
        $(st\ne 0)$ with
        $2$ parameters such that
\[
 C_{0,t}\in \mathcal{QL}(6),\quad
         C_{0,0}\in \mathcal{QL}(9).
\]
        
  \item[{\rm{(4)}}]
                     There exist a family of quintic
       $\{C_{s,t}\} \subset \mathcal{QL}(1)$ where
               $C_{s,t}=Q_{s,t}+L_{\infty}$ 
        $(st\ne 0)$ with $2$ parameters such that
        \[
         C_{0,t}\in \mathcal{QL}(2),\quad
         C_{0,0}\in \mathcal{QL}(10).
        \]
  \end{enumerate}
\end{proposition}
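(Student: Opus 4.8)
The plan is to realise every arrow of the diagram by an explicit multi-parameter family of quartics written in the visible torus form of \S2, reading off the limiting configurations from Lemmas \ref{l1}--\ref{l3}. For each of the four items I would take $Q_{s,t,u}=\{F_{s,t,u}=0\}$ with
\[
 F_{s,t,u}(X,Y,Z)=F_2'(X,Y,Z;s,t,u)^2+F_1'(X,Y,Z;s,t,u)^3\,Z,
\]
where $F_2'$ is a conic and $F_1'$ is a line depending polynomially on the parameters and $L_{\infty}=\{Z=0\}$ is fixed, starting from the visible $\mathcal{QL}(1)$ model $(X^2-Y^2-Z^2)^2+Y^3Z$ of \S4 and deforming the conic $C_2=\{F_2'=0\}$ and the line $L=\{F_1'=0\}$. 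For generic parameters the member lies in $\mathcal{QL}(1)$, and each specialisation to a coordinate subspace is computed to land in the stated class.

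In family (1) the three parameters realise three essentially independent geometric mechanisms. The parameter $s$ controls the contact of $L$ with $C_2$: generically they meet transversally in two points, so $\iota_1=1$ at each and $\sing=2a_2$ by Lemma \ref{l1}(1), while $s\to 0$ makes $L$ tangent to $C_2$ (so $\iota_1=2$), coalescing the two cusps into an $a_5$ and giving $\mathcal{QL}(1)\to\mathcal{QL}(7)$ (and likewise $\mathcal{QL}(2)\to\mathcal{QL}(8)$). The parameter $t$ controls the contact of $C_2$ with $L_{\infty}$: two transversal intersections give two simple tangencies of $Q$ (bi-tangent, type (i)), whereas $t\to 0$ merges them into a single contact with $\iota_2=2$ and $I(Q,L_{\infty})=4$ (type (iv)) by Lemma \ref{l1}(2), realising the vertical arrows $\mathcal{QL}(1)\to\mathcal{QL}(2)$, $\mathcal{QL}(7)\to\mathcal{QL}(8)$, $\mathcal{QL}(11)\to\mathcal{QL}(12)$ and $\mathcal{QL}(3)\to\mathcal{QL}(4)$. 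The parameter $u$ degenerates $C_2$ into a pair of lines through the relevant contact point, which by Lemma \ref{l2}(1),(2) upgrades the $a_5$ to an $e_6$, giving $\mathcal{QL}(7)\to\mathcal{QL}(11)$ and $\mathcal{QL}(8)\to\mathcal{QL}(12)$.

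Families (2), (3) and (4) are built in the same spirit, except that here one parameter instead introduces an outer singularity, which is forced to be an $a_1$ or $a_2$ by Lemma \ref{l3}: creating an outer node gives $\mathcal{QL}(1)\to\mathcal{QL}(3)$ and $\mathcal{QL}(2)\to\mathcal{QL}(4)$, the collision that upgrades the outer $a_1$ to an outer $a_2$ gives $\mathcal{QL}(3)\to\mathcal{QL}(5)$, and the analogous moves in the remaining coordinate directions produce $\mathcal{QL}(1)\to\mathcal{QL}(6)\to\mathcal{QL}(9)$ and $\mathcal{QL}(2)\to\mathcal{QL}(10)$. Once the families are in hand, for each prescribed specialisation I would compute the full singular locus of $Q_{s,t,u}$ together with its scheme-theoretic intersection with $L_{\infty}$, match it against Table 1, and verify that the family genuinely stays in $\mathcal{QL}(1)$ for all nonzero parameter values. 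The degeneration principle of \cite{OkaSurvey} then yields the surjection $\pi_1(\Bbb{C}^2\setminus Q(0))\twoheadrightarrow\pi_1(\Bbb{C}^2\setminus Q(s))$ and the divisibility $\Delta_{Q(s)}(t)\mid\Delta_{Q(0)}(t)$ along each arrow, which is exactly the assertion.

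The main obstacle I anticipate is the simultaneous control of \emph{all} singularities during a multi-parameter degeneration: I must check that no unexpected inner or outer singularities appear, that the outer ones are exactly the types permitted by Lemma \ref{l3} and have the correct contact behaviour, and that the limit is the stated configuration with $L_{\infty}$ in the prescribed position (rather than becoming transversal, or $Q$ becoming reducible) for every nonzero value of the parameters. Pinning down the outer singularities in particular requires the explicit coefficient computations in the style of \cite{Oka-Pho2}, and confirming that the three mechanisms remain independent when two or three parameters are sent to $0$ together is where the bookkeeping is heaviest.
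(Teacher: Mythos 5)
Your strategy is exactly the paper's: its proof consists of nothing more than four explicit families, each of the visible torus form you propose (for instance family (1) is $(X^2-t^2Y^2-s^2Z^2+uYZ)^2+Y^3Z$, so $C_2$ and $L$ are perturbed precisely so that each parameter implements one of the mechanisms you list), and the specializations are read off from the classification in Lemmas \ref{l1}--\ref{l3} just as you indicate, with the degeneration principle of \cite{OkaSurvey} supplying the $\pi_1$-surjection and the Alexander divisibility. One small correction to your account of families (3) and (4): the arrows into $\mathcal{QL}(6)$, $\mathcal{QL}(9)$ and $\mathcal{QL}(10)$ are produced not by outer singularities but by moving a point of $C_2\cap L$ onto $L_{\infty}$ and then increasing the contact there, so they are governed by Lemma \ref{l1}(3); this does not affect the overall argument.
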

 \noindent
For a proof,
 we give explicit defining equations of quartic for each case. 
\[\tag{1}
 \begin{split}
  F_{s,t,u}(X,Y,Z)&=s^4Z^4-2s^2uYZ^3
           +(2s^2(t^2Y^2-X^2)+u^2Y^2)Z^2\\
& \hspace{3cm}          +(Y^2+2u(X^2-t^2Y^2))YZ
           +(X^2-t^2Y^2)^2.
 \end{split}
\]
\[\tag{2}
 \begin{split}
  F_{s,t,u}(X,Y,Z)&=s(2+s)Z^4-3sXZ^3
           +\frac{1}{4}(x^2(3s+8u+8su)-y^2(s+1)(8u+3))Z^2\\
& \hspace{1cm}        -\frac{1}{8}(u(x^2-t^2y^2)+(x^2-9t^2y^2))XZ
  +\frac{1}{64}(8u+3)^2(X^2-t^2Y^2)^2.
 \end{split}
\]

\[\tag{3}
 \begin{split}
  F_{s,t}(X,Y,Z)&=(t^3+1)Z^4+3t^2\ell_1(X,Y,s)Z^3
  +(3\ell_1(X,Y,s)^2t-2(X^2-Y^2))Z^2\\
& \hspace{4cm}          +\ell_1(X,Y,s)^3Z
           +(X^2-Y^2)^2
 \end{split}
\]
where $\ell_1(X,Y,s)=X-sY-Y$.
\[
 \tag{4}
 \begin{split}
  F_{s,t}(X,Y,Z)&=(X+Y)Z^2+
  ((X-sY)^3-2(X+Y)(t^2Y^2-X^2))Z+(t^2Y^2-X^2)^2.
 \end{split}
\]

 \noindent
 {\bf{Example.}}
 Let $Q_1\in \mathcal{QL}(1)$ and $Q_5\in \mathcal{QL}(5)$
 be $QL$-configurations of types $(1)$ and $(5)$
 whose fundamental gruops and
 Alexander polynomials are given as follows (\cite{Y}):
 \[
  \begin{split}
   & \pi_1(\Bbb{C}^2\setminus Q_1)\cong
   \langle a, b \mid R_1 \rangle, \qquad 
    \pi_1(\Bbb{C}^2\setminus Q_5)\cong
   \langle a, b,c  \mid R_1,\,  R_2, \, R_3
                   \rangle\\
   &R_1: aba=bab,\
    R_2: bcb = cbc, \
    R_3: c(b^{-1}ab)c=(b^{-1}ab)c(b^{-1}ab)\\
&  \Delta_{Q_1}(t)=t^2-t+1,\qquad
   \Delta_{Q_5}(t)=(t^2-t+1)^2.
   \end{split}
 \]
By above Proposition, we have $\mathcal{QL}(1)\to \mathcal{QL}(5)$. 
Hence we have the surjectivity
$\pi_1(\Bbb{C}^2\setminus Q_5)\twoheadrightarrow
\pi_1(\Bbb{C}^2\setminus Q_1)$ and
the divisibility $\Delta_{Q_1}(t)\mid \Delta_{Q_5}(t)$.

\section{Non-irreducible $QL$-configurations}\label{s8}
In this section,
 we consider 
torus decompositions of non-irreducible \linebreak
$QL$-configurations
 for the quartics $(14),\dots,  (19)$.
Then we will get defining polynomials and torus decompositions
 by the same argument of irreducible case.
Recall that the situations of each case:
\vspace{0.1cm}
 \begin{center}
\begin{tabular}{|c|c|c|c|c|c|c|c|}
\hline
No. &\quad $\sing(Q)$\quad   \quad & $Q\cap L_{\infty}$  & irreducible components \\ \hline
(14) & $a_3^{\infty}+a_2+a_1$ & (ii)&a cuspidal cubic and a line  \\
(15) & $a_5+a_1$ & (i)    & two conics   \\
(16) & $a_5^{\infty}+a_2^{\infty}$ & (iii) & a cuspidal cubic and a line  \\ 
(17)  & $2a_3^{\infty}$ & (iii) &two conics   \\
(18) & $a_7^{\infty}$ & (v)  &two conics  \\
(19) & $2a_3^{\infty}+a_1$ & (iii)& a conic and two lines \\
                     \hline
\end{tabular}\\[0.2cm]
Table 3. \\[1mm]
\end{center}

\subsection{Invisible factorization of (2,4) torus curves}\label{s9}
Let $D$ be an invisible factorization of $(2,4)$ torus curve
which satisfies the following.
\[
\begin{split}
 F_{2,4}(X,Y,Z)&=F_2(X,Y,Z)^4-F_4(X,Y,Z)^2\\
               &=(F_2(X,Y,Z)^2-F_4(X,Y,Z))(F_2(X,Y,Z)^2+F_4(X,Y,Z))\\
               & =Z^4G(X,Y,Z).
\end{split}
\]
By the same argument in \S 2.2,
 we can assume that the forms of $F_2$ and $F_4$ are
 \[\begin{split}
  F_2(X,Y,Z)&=F_2^{(2)}(X,Y)+F_{2}^{(1)}(X,Y)Z+a_{00}Z^2,\quad
    \deg F_2^{(i)}=i,\\
  F_4(X,Y,Z)&=F_2(X,Y,Z)^2-c\,Z^4,\quad
  c=b_{00}-a_{00}^2\ne 0.   
   \end{split}
 \]
Then $D=\{G=0\}$ is defined by
 \[
  D:\quad G(X,Y,Z)=F_2(X,Y,Z)^2-c'Z^4=0,\quad c'\ne 0.
 \]
By the form of the defining polynomial of $D$,
the inner singularities of $D$ is on $L_{\infty}$.
Singularities of $D$ are described as follows.
 \begin{lemma}Under the above  notations,
  $D$ has the following singularities.
  \begin{enumerate}
   \item[{\rm{(1)}}] If $C_2$ is smooth at $P\in C_2\cap L_{\infty}$,
         then $(D,P)\sim a_{3\iota-1}$ where $\iota=I(C_2,L_{\infty};P)$. 
   \item[{\rm{(2)}}] If $C_2$ is singular at $P\in C_2\cap L_{\infty}$, then
         $D$  consists of four lines.
   \item[{\rm{(3)}}] If $P\in D$ is an outer singularity, then $(D,P)\sim a_1$. 
  \end{enumerate}
  \end{lemma}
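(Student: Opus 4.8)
The plan is to base everything on the factorisation
\[
 G=F_2^2-c'Z^4=(F_2-\lambda Z^2)(F_2+\lambda Z^2),\qquad \lambda^2=c',
\]
so that $D=C_2^+\cup C_2^-$ is the union of the two conics $C_2^{\pm}=\{F_2\mp\lambda Z^2=0\}$. Every point of $C_2\cap L_\infty$ satisfies $F_2=Z=0$ and hence lies on both conics, while no point off $C_2$ can lie on both (on $C_2^+\cap C_2^-$ both $F_2=\lambda Z^2$ and $F_2=-\lambda Z^2$ hold, forcing $Z=0$ and $F_2=0$). This single observation separates the inner analysis, assertions (1) and (2), from the outer one, assertion (3). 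Throughout I work in the affine chart $\{Y\neq 0\}$ with local coordinates $(u,v)$ centred at the point under study and $L_\infty=\{v=0\}$, so that the local equation of $D$ is $g(u,v)=f_2(u,v)^2-c'v^4$, where $f_2$ denotes $F_2$ in this chart and $\iota=\ord_u f_2(u,0)=I(C_2,L_\infty;P)$.

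For assertion (1), $C_2$ is smooth at $P$, so $f_2$ has non-zero linear part; by the implicit function theorem I would pass to adapted local coordinates in which $f_2$ is itself a coordinate function while the contact order $\iota\in\{1,2\}$ of $C_2$ with $L_\infty=\{v=0\}$ is recorded. Substituting into $g=f_2^2-c'v^4$ and reading the singularity type off the Newton boundary of the resulting germ then gives $(D,P)\sim a_{3\iota-1}$, the transverse case $\iota=1$ and the tangential case $\iota=2$ being handled separately. I regard this step as the main obstacle: one must choose coordinates that simultaneously linearise $f_2$ and keep track of the $v^4$ term, and then verify that the weighted-lowest part of $g$ is of the asserted $a$-type in each of the two sub-cases.

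Assertions (2) and (3) are then formal consequences of the factorisation. For (2), if $C_2$ is singular at $P$ it is a pair of lines $\ell_1\cup\ell_2$ with node $P$, and since $P\in L_\infty$ the three lines $\ell_1,\ell_2,L_\infty$ are concurrent at $P$; normalising $P=[1:0:0]$ while keeping $L_\infty=\{Z=0\}$ makes $F_2$ and $Z^2$ binary quadratic forms in $(Y,Z)$, so each factor $F_2\pm\lambda Z^2$ splits into two linear forms and $G$ becomes a product of four lines through $P$. For (3), an outer singularity $P$ has $F_2(P)\neq 0$ and hence, by the remark above, cannot lie on both conics; it is therefore a singular point of a single conic, and as a reduced conic (reducedness of $G$ excludes double lines) its only possible singularity is a node, giving $(D,P)\sim a_1$.
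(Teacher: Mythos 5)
Your factorization $G=F_2^2-c'Z^4=(F_2-\lambda Z^2)(F_2+\lambda Z^2)$, $\lambda^2=c'$, together with the two observations $C_2^{+}\cap C_2^{-}\subset C_2\cap L_{\infty}$ and $D\cap C_2\subset L_{\infty}$, is correct, and your proofs of assertions (2) and (3) are complete: after normalizing $P=[1:0:0]$ the polynomial $G$ becomes a binary quartic in $(Y,Z)$, hence a product of four concurrent lines, and an outer singular point lies on exactly one of the two reduced conics, hence is a node. For these two assertions your route is genuinely slicker than the paper, which gives no details at all and only says the proof is ``done in the same way as Lemma~1 and Oka--Pho''; the component decomposition replaces the outer-singularity computation entirely.

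The gap is in assertion (1), exactly at the step you yourself flag as the main obstacle and then do not execute; carrying it out refutes the exponent you assert. If $\iota=1$, then $(w,v)=(f_2,v)$ is a local coordinate system and $g=w^2-c'v^4\sim a_3$, not $a_2$; indeed $a_2$ is impossible on your own terms, since your first observation shows every point of $C_2\cap L_{\infty}$ lies on both conics, so $(D,P)$ has two local branches while $a_2$ is irreducible. If $\iota=2$, take $(u,w)=(u,f_2)$ as coordinates (possible since $\partial f_2/\partial v\neq 0$); solving $w=f_2(u,v)$ for $v$ gives $v=\psi(u,w)$ with $\ord_u\psi(u,0)=2$, hence
\[
 g=w^2-c'\,\psi(u,w)^4=w^2-c''\,u^8+(\text{terms above the Newton boundary}),\qquad c''\neq 0,
\]
so $(D,P)\sim a_7$. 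Thus your method proves $(D,P)\sim a_{4\iota-1}$, not $a_{3\iota-1}$: the $3$ in Lemma~1 comes from the cube $F_1'^3Z$, while here the relevant term is $Z^4$. (Equivalently, $I(C_2^+,C_2^-;P)=2\iota$, and two smooth branches with contact $m$ form an $a_{2m-1}$ singularity.) In fact the printed exponent is evidently a misprint which an honest execution of your computation would have exposed: the paper's own Table~3 lists $2a_3^{\infty}$ for quartic (17) ($\iota=1$ at two points), $a_7^{\infty}$ for (18) ($\iota=2$), and $2a_3^{\infty}+a_1$ for (19), all matching $a_{4\iota-1}$ and none matching $a_{3\iota-1}$. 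As written, your proposal asserts $a_{3\iota-1}$ as the output of a computation that actually yields something else, so the proof of (1) is not valid; with the conclusion corrected to $a_{4\iota-1}$, your argument goes through and is otherwise complete.
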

  \noindent
  Our proof is done in the same way as
  Lemma \ref{l1} and \cite{Oka-Pho2}.

\subsection{Torus decompositions of non-irreducible $QL$-configurations}
In this section,
we show the possibilities of
torus decompositions for non-irreducible $QL$-configurations.
Our proof is similar to the cases of  irreducible $QL$-configurations.
For the quartics  $(14)$ and $(15)$,
 we use $(2,3)$ visible factorizations.
For the quartics $(17)$, $(18)$ and $(19)$,
 we use $(2,4)$ invisible factorizations.\\

 {\renewcommand{\arraystretch}{1.36}
\begin{center}
\begin{longtable}{|c|c|c|c|c|c|c|c|}\hline
\multicolumn{2}{|c|}{Quartic (14)}\\ \hline
Singularities & $a_3^{\infty}+a_2+a_1$ at
      $[1:1:0]$, $[0:0:1]$, $[-1:0:1]$.
     \\ \hline
$Q\cap L_\infty$ &  singular at $[1:1:0]$ and tangent at $[-1:1:0]$.
           \\  \hline
Torus decomposition &
      $(X^2-Y^2-XZ+3YZ)^2+4\,(X-Y)^3Z=0$.
     \\ \hline

\multicolumn{2}{|c|}{Quartic (15)}\\ \hline
 Singularities & $a_5+a_1$ at
       $[0:0:1]$, $[-1:0:1]$.
     \\ \hline
$Q\cap L_\infty$ &  bi-tangent at $[1:1:0]$ and $[-1:1:0]$.
           \\  \hline
Torus decomposition &
      $(X^2-Y^2+XZ)^2+4\,X^3Z=0$.
     \\ \hline

\multicolumn{2}{|c|}{Quartic (17)}\\ \hline
 Singularities &  $2a_3$ at
       $[1:1:0]$ and  $[-1:0:0]$.
     \\ \hline
$Q\cap L_\infty$ & singular at        $[1:1:0]$ and  $[-1:0:0]$.
                \\  \hline
Torus decomposition &
      $\frac{1}{64}(2X^2-2Y^2-t_2Z^2)^4$ \\
                       \ & $-
      (\frac{1}{8}(2X^2-2Y^2-t_2Z^2)^2+\frac{1}{4}(4t_1-t_2^2)Z^4)^2$.
         \\ \hline

 \multicolumn{2}{|c|}{Quartic (18)}\\ \hline
Singularities &  $a_7$ at       $[0:1:0]$.
     \\ \hline
$Q\cap L_\infty$ &  singular at $[0:1:0]$.
           \\  \hline
Torus decomposition &
{\begin{small}
 $\left(Z^2-\frac{2}{a_{01}c_2}(c_3X-c_2Y)Z-2\frac{c_2}{a_{01}}X^2\right)^4$\end{small}} \\
 \ &
{\begin{small}
  $-      \left(\left(Z^2-\frac{2}{a_{01}c_2}(c_3X-c_2Y)Z
      -2\frac{c_2}{a_{01}}X^2\right)^2
      +2\frac{4a_{00}-a_{01}^2}{a_{01}^2}Z^4\right)^2$.
\end{small}}
  \\ \hline

  \multicolumn{2}{|c|}{Quartic (19)}\\ \hline
Singularities &  $2a_3+a_1$ at
       $[1:1:0]$, $[-1:1:0]$, $[0:0:1]$.
     \\ \hline
$Q\cap L_\infty$ & singular at    $[1:1:0]$ and  $[-1:1:0]$.
                \\  \hline
Torus decomposition &
 $\frac{1}{t^2}\Big{(} (-X^2+Y^2-\frac{1}{2}tZ^2)^4$ \\ \ &
\qquad \qquad     $  -\left((-X^2+Y^2-\frac{1}{2}tZ^2)^2
      -\frac{1}{2}t^2Z^4\right)^2\Big{)}$.
 \\
     \hline
 \end{longtable}
\end{center}
} 
\vspace{-2.0cm}

 \section{Infiniteness of $(2,3)$ quasi torus decompositions}
In this section,
we consider the possibilities of $(2,3)$
quasi torus decompositions of a plane curve which admits a $(2,3)$ torus
decomposition.
We assert:
\begin{proposition}\label{p1}
Let $C=\{f=0\}\subset \Bbb{C}^2$ be
 a $(2,3)$ torus curve of any degree. 
 Then $C$ has infinitely many $(2,3)$ quasi torus decompositions.
\end{proposition}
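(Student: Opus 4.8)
The plan is to realize \emph{all} $(2,3)$ quasi torus decompositions of $C$ as $K$-rational points of a single elliptic curve, where $K=\Bbb{C}(x,y)$, and then to produce infinitely many of them from the group law. Write the given torus decomposition as $f=f_p^2+f_q^3$ with $\deg f_p=3k$, $\deg f_q=2k$, $k\ge 1$, and consider over $K$ the curve
\[
 E:\quad Y^2=X^3+f .
\]
Since $f\ne 0$ in $K$, its discriminant $-432\,f^2$ is nonzero, so $E$ is a genuine elliptic curve (with $j=0$). The identity $f_p^2=(-f_q)^3+f$ shows that $P_0:=(-f_q,\,f_p)\in E(K)$, and conversely any quasi torus decomposition $f_r^{6}f=A^2+B^3$ yields the $K$-point $(-B/f_r^{2},\,A/f_r^{3})$. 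Thus quasi torus decompositions correspond to $K$-points of $E$ written in lowest terms.

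First I would run the group law: for each $n\ge 1$ form $nP_0=(X_n,Y_n)\in E(K)$. Using the standard fact that a Weierstrass point over a UFD is carried by a single denominator, I write $X_n=B_n/H_n^{2}$ and $Y_n=A_n/H_n^{3}$ with $\gcd(H_n,A_n)=\gcd(H_n,B_n)=1$; the relation $Y_n^2=X_n^3+f$ then clears to
\[
 A_n^{2}+(-B_n)^{3}=H_n^{6}\,f .
\]
Setting $f_r=H_n$, $f_p'=A_n$, $f_q'=-B_n$ gives a $(2,3)$ quasi torus decomposition. The pairwise coprimality demanded in the definition follows from minimality together with $f$ being reduced (recall $C$ is irreducible): a common prime of $f_p'$ and $f_q'$ would have its square divide $H_n^{6}f$ and, being prime to $H_n$, divide $f$; and no irreducible factor of $f$ can divide one of $f_p',f_q'$ without dividing the other.

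The decisive step is to show that $P_0$ has \emph{infinite} order, so that the points $nP_0$, and hence the associated decompositions, are genuinely distinct. Here I would exploit that $E$ is isotrivial: after the base change $K'=K(f^{1/6})$ it becomes the constant curve $Y'^2=X'^3+1$, and $P_0$ maps to $\bigl(-f_q/f^{1/3},\,f_p/f^{1/2}\bigr)$. A torsion section of a \emph{constant} elliptic scheme is necessarily a constant section, whereas $f_p/f^{1/2}$ is non-constant because $f_p^2/(f_p^2+f_q^3)$ is a non-constant rational function (its constancy would force $(1-\lambda)f_p^2=\lambda f_q^3$ for coprime nonconstant $f_p,f_q$, which is impossible). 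Hence $P_0$ is non-torsion, and the canonical height of $nP_0$ grows like $n^2$, forcing $\deg H_n\to\infty$ so that infinitely many of the decompositions differ.

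\textbf{The main obstacle} is precisely this infinite-order statement. The naive multiplier decomposition $f_r^{6}f=(f_r^{3}f_p)^2+(f_r^{2}f_q)^3$ always violates coprimality, and circumventing it genuinely requires the arithmetic of $E$ over the function field. The isotriviality ($j=0$, complex multiplication) must be handled with care when ruling out torsion, and there is nontrivial bookkeeping in passing from lowest-terms $K$-points to the coprime polynomial data $(f_r,f_p',f_q')$ matching the paper's definition; once these are in place, the infinitude is immediate from the height growth.
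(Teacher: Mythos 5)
Your argument is sound in outline and is, in a precise sense, the conceptual version of what the paper actually does: the paper's inductive formulas
\[
g_{i+1}=-\tfrac{4}{3}\,h_i^2+g_i^3,\qquad
h_{i+1}=\tfrac{\sqrt{-3}}{9}\,h_i\bigl(-8h_i^2+9g_i^3\bigr)
\]
are exactly the iteration, applied to $P_0=(g_0,h_0)$, of the degree-$3$ complex-multiplication endomorphism $[\sqrt{-3}]$ (up to an automorphism) of your curve $E:Y^2=X^3+f$, with $g_i$ playing the role of your denominator $H$. The difference is that the paper never mentions $E$: it writes the recursion down explicitly and verifies the one-line polynomial identity $h_{i+1}^2-g_{i+1}^3=g_i^6\,(h_i^2-g_i^3)$, from which $\bigl(\prod_{k\le i}g_k\bigr)^6f=h_{i+1}^2-g_{i+1}^3$ follows and the decompositions are visibly pairwise distinct because the multipliers $\prod_{k\le i}g_k$ have strictly increasing degree. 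Your route buys more --- a uniform interpretation of quasi torus decompositions as points of $E(K)$, and all multiples $nP_0$ rather than only the orbit under one isogeny --- at the price of genuinely needing function-field arithmetic. The pieces you invoke are essentially correct: the lowest-terms shape $X_n=B_n/H_n^2$, $Y_n=A_n/H_n^3$ works prime-by-prime over the UFD $\Bbb{C}[x,y]$, and non-torsion follows from isotriviality because all torsion of the constant curve $Y'^2=X'^3+1$ already lies in $E_0(\Bbb{C})$ while the image of $P_0$ is non-constant. The final height-growth step is not needed: once $P_0$ is non-torsion the points $nP_0$ are pairwise distinct, hence so are the triples $(H_n,A_n,-B_n)$.

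Two verifications are missing relative to the paper's definition of a quasi torus decomposition. First, the definition requires \emph{all} of $f,f_r,f_p,f_q$ to be pairwise coprime, and your argument covers every pair except $\gcd(H_n,f)$: a prime $\pi\mid f$ with $\pi\mid H_n$ is not excluded by the valuation bookkeeping, since $v_\pi(X_n)=-2k$, $v_\pi(Y_n)=-3k$ is perfectly consistent with $v_\pi(f)=1$. This can be repaired by reducing modulo such a $\pi$: as $f$ is reduced, the reduction of $E$ is the cuspidal cubic $Y^2=X^3$, whose smooth locus is $\Bbb{G}_a$ in characteristic zero; $P_0$ reduces to a smooth affine point because $f_p,f_q$ are prime to $f$, and $n\cdot\overline{P_0}$ is never the identity in $\Bbb{G}_a$, so $\pi\nmid H_n$. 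Second, for at most finitely many $n$ one of $A_n$, $B_n$ could be constant (violating the requirement $p,q>0$); a short argument shows only finitely many points of $E(K)$ have a constant coordinate, so infinitely many admissible $n$ remain. Neither gap is fatal, but both must be closed for the decompositions you produce to satisfy the definition as stated.
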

 \begin{proof}
  Suppose that $f(x,y)$ can be written as
$f(x,y)=h_0(x,y)^2-g_0(x,y)^3$.
  We put inductively
\[
 \begin{split}
g_{i+1}(x,y)&=-\frac{4}{3}\,h_{i}(x,y)^2+g_{i}(x,y)^3,\\
h_{i+1}(x,y)&=\frac{\sqrt{-3}}{9}h_i(x,y)(-8\,h_i(x,y)^2+9\,g_i(x,y)^3)  
 \end{split}
\]
  for $i\ge 0$. 
Then we claim that they satisfy the following equality:
\[\tag{$*_3$}
\left(\prod_{k=0}^ig_k(x,y)\right)^6f(x,y)=h_{i+1}(x,y)^2-g_{i+1}(x,y)^3,\quad
i\ge 0.
\]
Indeed,  by a simple calculation, we have
  \[
   h_{i+1}(x,y)^2-g_{i+1}(x,y)^3=g_i(x,y)^6\left(h_{i}(x,y)^2-g_{i}(x,y)^3\right).
  \]
  The assertion follows immediately from this equality.
  \end{proof}

\begin{remark}{\rm{
Let $C(t)$ be a family of curves given by
\[
  C(t):t\,h_{i+1}(x,y)^2-g_{i+1}(x,y)^3=0, \quad t\in \Bbb{C}.
\]
We thank Professor J. I. Cogolludo for informing us
the generic fiber of this pencil is not irreducible.}}
  \end{remark}
Now we study the location of singularities  of a family
of $(2,3)$ quasi torus decompositions which has the form  $(*_3)$. \\
We put $r_i(x,y):=\prod_{k=0}^ig_k(x,y)$ and
       $\Sigma_i:=\{h_i=0\}\cap \{g_i=0\}$  for $i\ge 0$. 
Then, by the definitions, we have the followings:
\begin{enumerate}
 \item[{\rm{(1)}}] $\Sigma_0$ is the set of inner singularities of $\{f=0\}$.
 \item[{\rm{(2)}}] $\Sigma_{i}\subset \{r_i=0\}$ for all $i \ge 0$. 
 \item[{\rm{(3)}}] $\Sigma_0\subset  \Sigma_1 \subset \cdots \subset \Sigma_i
       \subset \cdots$.
  \end{enumerate}
In particular,
$\{r_i=0\}$ contains the inner singularities of $\{f=0\}$
 for all $i\ge 0$.

By Proposition \ref{p1} and above observations,
it is important to study the existence of
$(2,3)$ torus decompositions which is obtained by
visible or invisible degenerations.
We are also interested in quasi torus decompositions
which does not come from a torus decomposition as in $(*_3)$.

We will give such an example $(2,3)$ quasi torus decomposition.
Let $Q=\{f=0\}$ be the three cuspidal quartic which has three
$(2,3)$ torus decompositions (V-1), (V-2) and (V-3) as in \S 5.
Recall that the $(2,3)$ torus decomposition (V-1) and
locations of singularities:
 \[
 \begin{split}
&\qquad f(x,y)=-3(x^2+y^2+4x+1)^2+4(2x+1)^3,\\
& P_1=(1,0),\quad
  P_2=\left(-\frac{1}{2},\frac{\sqrt{3}}{2}\right),\quad
  P_3=\left(-\frac{1}{2},-\frac{\sqrt{3}}{2}\right)
 \end{split}
 \]
 where $P_2,P_3$ are the  inner singularities of
 torus decomposition (V-1).
 Now we take following three polynomials
 $s_1$ $s_3$ and $s_5$ of degree $1$, $3$ and $5$ respectively:  
\[
\begin{split}
 s_1(x,y)&=x-Iy-1,\\
 s_3(x,y)&=\sqrt[3]{4}(3Iy^3-(5x+7)y^2-I(x-1)^2y-(x-1)^3),\\
 s_5(x,y)&=\sqrt{3}(y^5+3I(x+5)y^4-2
 (x^2+13x+10)y^3+2I(x-4)(x-1)^2y^2\\
&\hspace{5cm} -3(x+1)(x-1)^3y -I(x-1)^5).
 \end{split}
 \]
Then they satisfy the following equality:
\[
 s_1(x,y)^6f(x,y)=s_5(x,y)^2+s_3(x,y)^3.
\]
Note that $\{s_1=0\}$ does not pass through the inner singularities of $Q$.
Thus this decomposition is example  of
 $(2,3)$ quasi torus decomposition which does not  come from as in $(*_3)$.

\vspace{-0.3cm}
\begin{figure}[H]
 \begin{center}
  \includegraphics[scale=0.65]{./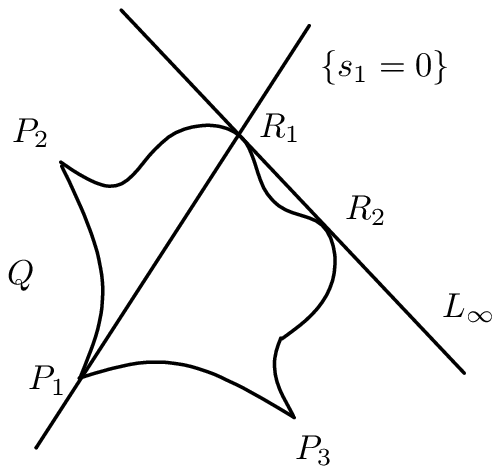}
 \end{center}
 \caption{}
 \label{a21}
\end{figure}
\vspace{-0.7cm}

\vspace{1cm}
\noindent
{\bf{Acknowledgment.}}
I would like to express our deepest gratitude to 
Professor Hiro-o Tokunaga  
who has proposed this problem.
I also express our deepest gratitude to 
Professor Mutsuo Oka for his various
advices during the preparation
of this paper.

\def\cprime{$'$} \def\cprime{$'$} \def\cprime{$'$} \def\cprime{$'$}
  \def\cprime{$'$}

\end{document}